\newcommand\mathens[1]{\mathbb{#1}} %fonte des ensembles classiques
\newcommand{\ud}{\mathrm{d}}
\newcommand{\N}{\mathens{N}}
\newcommand{\R}{\mathens{R}}
\newcommand{\C}{\mathens{C}}
\newcommand{\CP}{\C\mathrm{P}}
\newcommand{\RP}{\R\mathrm{P}}
\newcommand\sphere[1]{\mathens{S}^{#1}}
\newcommand{\ham}{\mathrm{Ham}}
\newcommand{\conto}{\mathrm{Cont}_{0}}
\newcommand{\id}{\mathrm{id}}
\newcommand{\nlmas}\upmu
\newtheorem{thm}{Theorem}[section]
\newtheorem{lem}[thm]{Lemma}
\newtheorem{cor}[thm]{Corollary}
\newtheorem{prop}[thm]{Proposition}
\newtheorem{questions}[thm]{Questions}
\newtheorem{prop-def}[thm]{Definition-proposition}
\theoremstyle{definition}
\newtheorem{definition}[thm]{Definition}
\theoremstyle{remark}
\newtheorem{rem}[thm]{Remark}
\newcommand\tpsi{\widetilde{\psi}}
\newcommand\tphi{\widetilde{\varphi}}
\newcommand\Leg{\mathcal{L}}
\newcommand\uLeg{\widetilde{\Leg}}
\newcommand\Gcont{\mathcal{G}}
\newcommand\uGcont{\widetilde{\Gcont}}
\newcommand\cleq{\preceq}
\newcommand\cgeq{\succeq}
\newcommand\spec{\mathrm{Spec}}
\newcommand\dSCH{\mathrm{d}_\mathrm{SCH}}
\newcommand\dSH{\mathrm{d}_\mathrm{SH}}
\newcommand\dspec{\mathrm{d}_\mathrm{spec}}
\newcommand\Nspec[1]{\left|#1\right|_\mathrm{spec}}
\newcommand\NSH[1]{\left|#1\right|_\mathrm{SH}}
\newcommand\LSCH{\mathrm{L}_\mathrm{SCH}}
\newcommand\LSH{\mathrm{L}_\mathrm{SH}}
\DeclareFontFamily{U}{mathb}{\hyphenchar\font45}
\DeclareFontShape{U}{mathb}{m}{n}{
      <5> <6> <7> <8> <9> <10> gen * mathb
      <10.95> mathb10 <12> <14.4> <17.28> <20.74> <24.88> mathb12
}{}
\DeclareSymbolFont{mathb}{U}{mathb}{m}{n}
\DeclareMathSymbol{\cll}{3}{mathb}{"CE}
\let\@wraptoccontribs\wraptoccontribs\makeatother
\begin{document}

\title{$C^1$-Local Flatness and Geodesics of the Legendrian Spectral Distance}
\author[S. Allais]{Simon Allais}
\address{S. Allais, Universit\'e de Strasbourg, IRMA UMR 7501, F-67000 Strasbourg, France}
\email{simon.allais@math.unistra.fr}
\urladdr{https://irma.math.unistra.fr/~allais/}

\author[P.-A. Arlove]{Pierre-Alexandre Arlove}
\address{P.-A. Arlove, Ruhr-Universit\"at Bochum, Fakult\"at f\"ur Mathematik, 
44780 Bochum, Germany}
\email{pierre-alexandre.arlove@ruhr-uni-bochum.de}
\date{march 15, 2024}

\subjclass[2020]{53D10, 57R17, 58B20}
\keywords{Orderability in contact geometry, spectral invariants, 
    Legendrian isotopies, contactomorphisms, Hofer's geometry,
geodesics}

\maketitle

\begin{abstract}
    In this article, we give an explicit computation of the order
    spectral selectors of a pair of $C^1$-close Legendrian submanifolds
    belonging to an orderable isotopy class. The $C^1$-local flatness of the
    spectral distance and the characterisation of its geodesics are
    deduced. Another consequence is the $C^1$-local coincidence of
    spectral and Shelukhin-Chekanov-Hofer distances. Similar
    statements are then deduced for several contactomorphism groups.
\end{abstract}

\section{Introduction}
Recently Nakamura \cite{Nakamura2023} and the authors of this article
\cite{allais2023spectral} defined independently the same distance on the
isotopy class of a closed Legendrian whenever it is orderable. While Nakamura
showed that the topology induced by this distance is the interval topology (see
\cite{CheNem2020} for a definition), the authors showed the spectrality of this
distance and therefore named it the spectral distance. Both results suggest the
natural character and importance of the spectral distance: on the one hand it
is a powerful object to study the geometry of this infinite dimensional space,
and on the other it allows to study and quantify contact dynamic
phenomena. We refer directly to \cite{Nakamura2023,allais2023spectral} for the
illustration of our previous words.

In this article we study the orderable isotopy class of a closed Legendrian 
submanifold endowed
with the spectral distance as a metric space on its own, being much inspired
by the seminal work of Bialy-Polterovich on the geodesics of Hofer's metric
\cite{BiaPol1994}.
In particular we show
that it is $C^1$-locally flat in the sense that $C^1$-locally it is isometric
to a normed vector space (see Section~\ref{sec:intro:flatness}). As a consequence we also get in this setting
the $C^1$-local flatness of the Shelukhin-Chekanov-Hofer distance
\cite{RosenZhang2020}. The flatness property allows us moreover
to give a complete description of the geodesics of these two distances (see
Section~\ref{sec:intro:geodesic}). Finally this allows us to get similar statements in some cases
for universal covers of certain contactomorphism groups.

\subsection{$C^1$-flatness}
\label{sec:intro:flatness}

From now on $(M,\xi=\ker\alpha)$ denotes a cooriented contact manifold
endowed with a contact form $\alpha$ the Reeb flow of which is complete.
We denote $\Leg$ (resp. $\uLeg$) the Legendrian isotopy class of some
closed Legendrian submanifold of $M$
(resp. the universal cover of $\Leg$) endowed with the $C^1$-topology.
Assuming $\Leg$ (resp. $\uLeg$) is orderable, let
$\ell_\pm^\alpha:\Leg\times\Leg\to\R$ (resp. $\uLeg\times\uLeg\to\R$)
denote the order $\alpha$-spectral selectors so that the $\alpha$-spectral
distance between two submanifolds $\Lambda_0,\Lambda_1\in\Leg$ 
(resp. $\uLeg$) is defined as
\[
    \dspec^\alpha(\Lambda_0,\Lambda_1) := \max\left\{ \ell_+^\alpha(\Lambda_0,\Lambda_1),
    -\ell_-^\alpha(\Lambda_0,\Lambda_1) \right\}\in [0,+\infty).
\]
The map $\dspec^\alpha$ is a genuine distance on $\Leg$ and is \textit{a priori} just a pseudo-distance on $\uLeg$. We refer to Section~\ref{sec:spec} for details.

Let $\Lambda\in\Leg$ be a closed Legendrian submanifold.
According to the Weinstein neighborhood theorem,
there is an open set of $M$ containing $\Lambda$ which is contactomorphic
to an open set of $J^1\Lambda$ containing the zero-section, identifying $\Lambda$
with the zero-section and the contact form $\alpha$ with the canonical contact form
$\alpha_0:=\ud z - p\cdot\ud q$. Every Legendrian $C^1$-close to $\Lambda$ in $\Leg$ is then identified
uniquely with the 1-jet of a map $f\in C^\infty(\Lambda,\R)$ of some $C^2$-neighborhood
$U$ of the zero map.
Let us call the induced continuous embedding
$\Phi : U\to \Leg$ an $\alpha$-Weinstein parametrization of $\Leg$ centered at $\Lambda$
(it is a homeomorphism between $U$ and a $C^1$-neighborhood of $\Lambda$).
As $\Leg$ and $\uLeg$ are locally homeomorphic, one naturally extends the notion
of Weinstein parametrization to $\uLeg$.

Following the terminology introduced by Bialy-Polterovich in \cite{BiaPol1994},
we say that $\Leg$ (resp. $\uLeg$, and $\Gcont$ or $\uGcont$ introduced in Section \ref{ssec:conventions}) endowed with 
a pseudo-distance $d$ is $C^1$-locally flat if at any point of it there exists a $C^1$-neighborhood
on which $d$ is isometric to the restriction of a normed
distance to some open neighborhood of a vector space.
In our cases, the vector space will always be $C^\infty(\Lambda,\R)$ endowed with the $C^0$-norm
$f\mapsto \max|f|$ for some closed manifold $\Lambda$.

\begin{thm}\label{thm}
 If $\Leg$ (resp. $\uLeg$) is orderable then endowed with the Legendrian
 spectral distance it is $C^1$-locally flat. More precisely, for every
 $\Lambda\in\Leg$ (resp. $\uLeg$), and every $\alpha$-Weinstein parametrization
 $\Phi:U\to\Leg$ (resp. $U\to\uLeg$) centered at $\Lambda$, there exists
 $U'\subset U$ a $C^2$-neighborhood of the zero map such that for all $f,g\in U'$
   \[\ell_+^\alpha(\Phi(f),\Phi(g))=\max (f-g)\quad \quad \text{ and }\quad \quad  \ell_-^\alpha(\Phi(f),\Phi(g))=\min (f-g),\]
 in particular 
   $\dspec^\alpha(\Phi(f),\Phi(g))=\max |f-g|$.
\end{thm}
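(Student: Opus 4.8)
The plan is to work entirely inside the Weinstein chart provided by the parametrization $\Phi$, where $\Phi(f)$ and $\Phi(g)$ are the one-jets $j^1f,j^1g\subset J^1\Lambda$ and the Reeb flow of $\alpha_0=\ud z-p\cdot\ud q$ is the vertical translation $\phi^c_\alpha\colon(q,p,z)\mapsto(q,p,z+c)$, so that $\phi^c_\alpha(\Phi(g))=\Phi(g+c)$. The first step is to identify the action spectrum. A Reeb chord between $\Phi(f)$ and $\Phi(g)$ is a vertical segment over a base point $q$ joining $(q,\ud f_q,f(q))$ to $(q,\ud g_q,g(q))$; it exists precisely when $\ud f_q=\ud g_q$, that is when $q$ is a critical point of $f-g$, and its signed length is then $f(q)-g(q)$. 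Hence $\spec(\Phi(f),\Phi(g))$ is exactly the set of critical values of $f-g$, a compact measure-zero subset of $[\min(f-g),\max(f-g)]$. Since the selectors are spectral (from the earlier part of the paper), $\ell_\pm^\alpha(\Phi(f),\Phi(g))$ is a critical value of $f-g$; in particular it lies in $[\min(f-g),\max(f-g)]$.

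The second step gives the soft bounds by exhibiting explicit nonnegative isotopies. For $c\geq\max(f-g)$ the path $t\mapsto\Phi\bigl((1-t)f+t(g+c)\bigr)$ is a Legendrian isotopy from $\Phi(f)$ to $\Phi(g+c)$ whose contact Hamiltonian, restricted to the moving Legendrian, equals $(g+c)-f\geq 0$; it is therefore a nonnegative isotopy, so $\Phi(f)\cleq\phi^c_\alpha(\Phi(g))$. Combined with the Reeb-shift and monotonicity properties of $\ell_+^\alpha$ (or directly from its order-theoretic description), this yields $\ell_+^\alpha(\Phi(f),\Phi(g))\leq\max(f-g)$. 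Symmetrically, for $c\leq\min(f-g)$ one has $g+c\leq f$, whence $\phi^c_\alpha(\Phi(g))=\Phi(g+c)\cleq\Phi(f)$, giving $\ell_-^\alpha(\Phi(f),\Phi(g))\geq\min(f-g)$. So both selectors are squeezed to a critical value on the correct side of the interval, and the whole theorem reduces to the two reverse inequalities $\ell_+^\alpha(\Phi(f),\Phi(g))\geq\max(f-g)$ and $\ell_-^\alpha(\Phi(f),\Phi(g))\leq\min(f-g)$.

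These reverse inequalities are the crux, and this is where purely soft input stops: monotonicity, the triangle inequality and spectrality only sandwich the selectors between $\min$ and $\max$, and would equally hold for a selector attached to any fixed critical value. One must use that $\ell_+^\alpha$ is built from the fundamental (orientation) class and $\ell_-^\alpha$ from the unit. Concretely I would invoke the generating-function (or sheaf) model underlying the spectrality statement: a Legendrian $C^1$-close to the zero section in $J^1\Lambda$ carries a generating function quadratic at infinity equal, after the standard fibre reduction, to $f-g$ itself, and $\ell_+^\alpha$ is the min--max value $c(\mu,f-g)$ associated with $\mu\in H^{\dim\Lambda}(\Lambda)$, which by Lusternik--Schnirelmann theory is $\max(f-g)$, while $\ell_-^\alpha=c(1,f-g)=\min(f-g)$. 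Equivalently, in the order-theoretic description $\ell_+^\alpha(\Phi(f),\Phi(g))=\inf\{c:\Phi(f)\cleq\phi^c_\alpha\Phi(g)\}$ the reverse inequality is the non-squeezing assertion that no nonnegative isotopy joins $\Phi(f)$ to $\Phi(g+c)$ once $c<\max(f-g)$, which rests on the same non-degeneracy of the maximal Reeb chord. The identity $\dspec^\alpha(\Phi(f),\Phi(g))=\max|f-g|$ then follows from the definition of $\dspec^\alpha$ and $-\min(f-g)=\max(g-f)$.

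It remains to fix the neighborhood and transfer to the universal cover. I would take $U'\subset U$ a $C^2$-ball small enough that every $\Phi(f')$ with $f'\in U'$ lies in the chart and in $\Leg$, that all interpolating isotopies above remain in the chart, and that the generating-function comparison creates no spurious critical points, so that $\spec(\Phi(f),\Phi(g))$ receives no contribution from the boundary of the chart and equals exactly the critical-value set of $f-g$. The case of $\uLeg$ is formally identical: the selectors, the partial order, the Reeb shift and the Weinstein parametrization all lift, and the arguments are unchanged. The main obstacle is squarely the reverse inequality of the third paragraph, which carries the genuine rigidity of the construction and is the only place where the $C^1$-smallness of the data, the compactness of $\Lambda$ and the min--max content of the selectors are essential.
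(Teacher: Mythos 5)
Your first two steps (identifying the spectrum with the critical values of $f-g$ and sandwiching the selectors between $\min(f-g)$ and $\max(f-g)$ via explicit non-negative isotopies) match the paper's Lemma~\ref{lem1}, Corollary~\ref{cor:critval} and inequality~\eqref{eq:LselectorsHam}. The gap is in your third paragraph, which is exactly the crux you correctly isolate. You resolve it by declaring that $\ell_+^\alpha$ ``is'' the min--max value $c(\mu,f-g)$ of a generating function and $\ell_-^\alpha=c(1,f-g)$, so that Lusternik--Schnirelmann theory finishes the proof. But the selectors of this paper are defined purely order-theoretically, as $\ell_+^\alpha(\Lambda_1,\Lambda_0)=\inf\{t:\Lambda_1\cleq\phi_t^\alpha\Lambda_0\}$, where $\cleq$ involves all non-negative isotopies in the ambient manifold $M$, not only those staying in the Weinstein chart; no identification with generating-function min--max values is established (and none is available in general: the theorem covers, e.g., orderable classes in hypertight manifolds where no GFQI model for the Legendrians exists). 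Invoking that identification is therefore not a proof within the paper's framework but an appeal to a different, unproven (and in general unavailable) theorem. The same objection applies to your fallback formulation as a ``non-squeezing assertion'': you name the statement that needs proving but do not prove it.

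Moreover, your claim that ``purely soft input stops'' here --- that monotonicity, triangle inequality and spectrality would equally tolerate a selector attached to any fixed critical value --- is false, and seeing why is the actual content of the paper's argument. The missing soft ingredients are \emph{non-degeneracy} (property 6 of Theorem~\ref{thm:LegSS}) and $C^1$-\emph{continuity} of $\ell_+^\alpha$ (from~\eqref{eq:dspecdSCH}). The paper first reduces to $g=0$ (Lemma~\ref{lem0}, which itself requires a cutoff argument in a general $M$ that your chart-only discussion skips), then runs the ray $t\mapsto\Phi(tf)$: since $t\mapsto\ell_+^\alpha(\Phi(tf),\Lambda)$ is continuous with values in the nowhere dense set $t\cdot\mathrm{CV}(f)$, it equals $tf(x_0)$ for a single critical point $x_0$. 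To force $f(x_0)=\max f$ for Morse $f\geq 0$, one cuts $f$ off to a bump $\rho f\leq f$ supported near a maximum with $\mathrm{CV}(\rho f)=\{0,\max f\}$; spectrality restricts $\ell_+^\alpha(\Phi(t\rho f),\Lambda)$ to $\{0,t\max f\}$, non-degeneracy excludes $0$, and monotonicity ($\rho f\leq f$) transfers the lower bound $t\max f$ to $\Phi(tf)$. Density of Morse functions and $C^1$-continuity then handle general $f$. So the reverse inequality does follow from the axiomatic properties alone; your proposal replaces this elementary mechanism with external machinery that the hypotheses of the theorem do not supply.
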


Some known examples of orderable $\Leg$ or $\uLeg$ are the isotopy class of
the zero-section of any 1-jet space over a closed manifold, the universal cover of the
isotopy class of $\RP^n$
in the standard contact $\RP^{2n+1}$, the universal cover of the isotopy class of a fiber
$S^*_x X$ of any unit cotangent bundle (see \emph{e.g.}
\cite[Examples~2.10]{allais2023spectral} for references).

An analogous statement can be given for contactomorphisms when appropriate
spectral selectors exist. For instance such exist when $\uLeg(\Delta)$ is orderable
for the contact product $M\times M\times\R$
(see Section~\ref{sec:contacto}) which is the case when $M$ is a hypertight closed contact manifold or a closed unit cotangent bundle or any contact boundary
of a compact Liouville domain, the symplectic homology of which does not vanish
(we also refer to \cite[Examples~2.10]{allais2023spectral}). Other situations in which suitable spectral selectors exist concern universal covers of contactomorphism groups of contact lens spaces \cite{allais2024spectral}.

\begin{rem}\label{rem:1 jet}
    Note that for the $1$-jet bundle of a closed manifold $X$ a stronger statement
    can be directly deduced from Corollary 5.4 of \cite{chernov2011legendrian}.
    This Corollary tells us that $j^1f\cleq j^1g$ (\emph{cf.}
    Section~\ref{ssec:conventions}) if and only if $f\leq g$ everywhere, where
    $f,g :X\to \R$ are smooth functions and $j^1f$, $j^1g$ denote the graph of
    their respective $1$-jet. This indeed implies that
    $\ell_+^{\alpha_0}(j^1f,j^1g)=\max (f-g)$ and
    $\ell_-^{\alpha_0}(j^1f,j^1g)=\min (f-g)$, where $\alpha_0=\ud z-p\cdot\ud
    q$ is the canonical contact form of $J^1X=T^*X\times\R$.
\end{rem}

An important consequence of Theorem~\ref{thm} is that the
Shelukhin-Chekanov-Hofer (SCH) distance $\dSCH^\alpha$ agrees $C^1$-locally
with the spectral distance $\dspec^\alpha$, and therefore is $C^1$-locally flat
(we refer to Section~\ref{sec:SHdist} for definitions).

\begin{cor}\label{cor:shel}
        Suppose $\Leg$ (resp. $\uLeg$) is orderable. Then for every
        $\Lambda\in\Leg$ (resp. $\uLeg$) there exists $\mathcal{U}$ a
        $C^1$-neighborhood of $\Lambda$ such that 
        \[\dSCH^\alpha(\Lambda_1,\Lambda_0)=\dspec^\alpha(\Lambda_1,\Lambda_0)\text{ for all } \Lambda_1,\Lambda_0\in\mathcal{U}.\] Therefore endowed with the SCH distance $\Leg$ (resp. $\uLeg$) is $C^1$-locally flat. 
\end{cor}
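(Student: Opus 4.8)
The plan is to sandwich $\dSCH^\alpha$ between $\dspec^\alpha$ and the explicit quantity $\max|f-g|$ furnished by Theorem~\ref{thm}. One inequality is completely general: on all of $\Leg$ (resp.\ $\uLeg$) one has $\dspec^\alpha\le\dSCH^\alpha$, which is nothing but the Lipschitz property of the order spectral selectors with respect to the SCH norm recalled in Section~\ref{sec:SHdist}. It therefore remains to establish the reverse inequality $C^1$-locally, for which it suffices to exhibit, between two $C^1$-close Legendrians, a Legendrian isotopy of SCH length at most $\max|f-g|$.

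I would fix $\Lambda\in\Leg$, an $\alpha$-Weinstein parametrization $\Phi:U\to\Leg$ centered at $\Lambda$, and the $C^2$-neighborhood $U'$ of the zero map provided by Theorem~\ref{thm}; after shrinking $U'$ to a convex $C^2$-ball $U''$ I set $\mathcal{U}:=\Phi(U'')$. For $f,g\in U''$ the segment $f_t:=(1-t)f+tg$ stays in $U''$, so that $t\mapsto\Phi(f_t)$, identified in the Weinstein chart with the family of $1$-jets $j^1 f_t$, is a Legendrian isotopy from $\Phi(f)$ to $\Phi(g)$ contained in $\mathcal{U}$. In the model $(J^1\Lambda,\alpha_0=\ud z-p\cdot\ud q)$ I would generate it by the contact Hamiltonian equal to $g-f$ read on the base, i.e.\ $h(q,p,z)=(g-f)(q)$: a direct computation of its contact vector field shows that the flow fixes the base point $q$ and carries $j^1 f$ to $j^1\big(f+t(g-f)\big)=j^1 f_t$, reaching $j^1 g$ at time $1$. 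Since $h$ depends only on $q$, its supremum norm on the chart equals $\max_\Lambda|g-f|=\max|f-g|$. Multiplying $h$ by a cutoff equal to $1$ on a neighborhood of the compact trajectory $\bigcup_{t}j^1 f_t$ and supported in the chart extends it to a global contact Hamiltonian $h_t$ on $M$ that moves the Legendrian identically while keeping $\max_M|h_t|\le\max|f-g|$. Integrating over $t\in[0,1]$ gives $\dSCH^\alpha(\Phi(f),\Phi(g))\le\max|f-g|$.

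Combining this with $\dspec^\alpha(\Phi(f),\Phi(g))=\max|f-g|$ from Theorem~\ref{thm} and the general bound $\dspec^\alpha\le\dSCH^\alpha$ yields $\dSCH^\alpha=\dspec^\alpha$ on $\mathcal{U}$; the $C^1$-local flatness of $\dSCH^\alpha$ is then inherited from that of $\dspec^\alpha$, and the universal-cover case is identical after lifting the segment $\Phi(f_t)$. The point requiring care is the passage from the Hamiltonian on the chart to a global one on $M$ without inflating its sup-norm: the cutoff argument works precisely because the Legendrians $j^1 f_t$ sweep out a compact set on which the cutoff stays equal to $1$, so the motion of $\Lambda_t$ is unchanged while $|h_t|$ never exceeds its value $\max|f-g|$ on the Legendrian. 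I would also confirm, against the normalisation fixed in Section~\ref{sec:SHdist}, that the cost of this straight path is indeed $\max|f-g|$, which is the reason the selectors' explicit values in Theorem~\ref{thm} force the two distances to agree.
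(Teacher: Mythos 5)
Your proof is correct and follows essentially the same route as the paper: the general inequality $\dspec^\alpha\le\dSCH^\alpha$ combined with the straight path $t\mapsto\Phi((1-t)f+tg)$, whose SCH length is $\max|f-g|$, matching the value of $\dspec^\alpha$ from Theorem~\ref{thm}. The only difference is that your cutoff/globalization step is superfluous: the SCH length is computed from the \emph{Legendrian} $\alpha$-Hamiltonian $h_t\circ j_t=\alpha(\partial_t j_t)$, which is intrinsic to the isotopy and invariant under the strict contactomorphism $\Psi$, so the paper simply reads off $(q,p,z)\mapsto g(q)-f(q)$ on $j^1f_t$ without needing any ambient extension.
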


Let us discuss another corollary which has motivated the writing of Theorem
\ref{thm}. Recall from \cite{Nemirovski_dejavu} that a Legendrian isotopy
$(\Lambda_t)\subset\Leg$ (resp. $\uLeg$) is said to be monotone if
$\Lambda_t\cleq\Lambda_s$ whenever $t\leq s$ (\emph{cf.}
Section~\ref{ssec:conventions}). The next corollary answers
\cite[Question 2.4.]{Nemirovski_dejavu}.

\begin{cor}
If $\Leg$ is orderable (resp. $\uLeg$ is orderable) then an isotopy
$(\Lambda_t)\subset\Leg$ (resp. $\uLeg$) is monotone if and only if it is
non-negative.  
\end{cor}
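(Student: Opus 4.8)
\emph{Proof plan.} I would treat the two implications separately, the content being concentrated in one of them. The implication ``non-negative $\Rightarrow$ monotone'' should be immediate from the definition of the relation $\cleq$: since $\Lambda\cleq\Lambda'$ means exactly that $\Lambda$ and $\Lambda'$ are joined by a non-negative Legendrian isotopy in $\Leg$ (resp. $\uLeg$), if $(\Lambda_t)$ is non-negative then for $t\leq s$ the restriction of $(\Lambda_u)$ to $[t,s]$ is such a path, so $\Lambda_t\cleq\Lambda_s$. This direction will use neither Theorem~\ref{thm} nor orderability.

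For the converse I would argue pointwise in time. Non-negativity of $(\Lambda_t)$ is a pointwise condition on its generating contact Hamiltonian, so it suffices to prove that this Hamiltonian is non-negative along $\Lambda_{t_0}$ for each fixed $t_0$. The plan is to fix an $\alpha$-Weinstein parametrisation $\Phi\colon U\to\Leg$ (resp. $\uLeg$) centred at $\Lambda_{t_0}$ together with the neighbourhood $U'\subset U$ furnished by Theorem~\ref{thm}; by continuity of the isotopy one has $\Lambda_s=\Phi(f_s)$ with $f_s\in U'$ and $f_{t_0}=0$ for $s$ close to $t_0$. Since the zero-section represents $\Lambda_{t_0}$ in these coordinates, the contact Hamiltonian restricted to $\Lambda_{t_0}$ is the vertical velocity $\partial_t f_t|_{t=t_0}$, so everything reduces to checking $\partial_t f_t|_{t=t_0}\geq 0$.

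This is where Theorem~\ref{thm} enters. I would use the basic monotonicity of the order selectors, namely that $\Lambda_0\cleq\Lambda_1$ forces $\ell_+^\alpha(\Lambda_0,\Lambda_1)\leq 0$ (a consequence of $\ell_+^\alpha(\Lambda,\Lambda)=0$ and the monotonicity of $\ell_+^\alpha$ under $\cleq$ established in \cite{allais2023spectral}). For $t_0\leq s$ monotonicity gives $\Phi(0)\cleq\Phi(f_s)$, hence $\ell_+^\alpha(\Phi(0),\Phi(f_s))\leq 0$; Theorem~\ref{thm} identifies the left-hand side with $\max(-f_s)$, forcing $f_s\geq 0$. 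Symmetrically, for $s\leq t_0$ one gets $\ell_+^\alpha(\Phi(f_s),\Phi(0))=\max f_s\leq 0$, hence $f_s\leq 0$. Thus at each point $s\mapsto f_s$ is $\leq 0$ before $t_0$, $\geq 0$ after $t_0$, and vanishes at $t_0$, so all its difference quotients at $t_0$ are $\geq 0$ and $\partial_t f_t|_{t=t_0}\geq 0$. Letting $t_0$ vary then yields that $(\Lambda_t)$ is non-negative.

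The one genuine obstacle is the passage from the global order relation $\Lambda_{t_0}\cleq\Lambda_s$ to the pointwise inequality $f_s\geq 0$: a priori $\cleq$ only yields a spectral estimate on $\ell_+^\alpha$, and it is precisely the $C^1$-local flatness of Theorem~\ref{thm} that upgrades this estimate into an honest pointwise inequality on the generating functions. The remaining ingredients—the local identification of the Hamiltonian with $\partial_t f_t|_{t=t_0}$ in the Weinstein chart and the elementary sign analysis of the difference quotients—are routine, and the $\uLeg$ case runs verbatim using the Weinstein parametrisation of $\uLeg$.
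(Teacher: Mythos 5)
Your proposal is correct and takes essentially the same route as the paper: both work in a Weinstein chart centred at a time slice $\Lambda_{t_0}$, invoke Theorem~\ref{thm} to identify the order selectors with $\max$/$\min$ of the generating functions, and combine this with the monotonicity and normalization of $\ell_\pm^\alpha$ to force the sign of the Hamiltonian. The only differences are cosmetic: the paper argues by contradiction with $\ell_-^\alpha$ while you argue directly with $\ell_+^\alpha$ and a difference-quotient computation, and you also spell out the trivial direction that the paper omits.
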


\begin{proof}
    Suppose by contradiction that a monotone isotopy $(\Lambda_t)$ is not
    non-negative at some time $t_0\in[0,1]$. One can assume $t_0=0$.
    Let $\Phi:U\to\Leg$ be a parametrization centered at $\Lambda_0$
    given by Theorem~\ref{thm}.
    Therefore for $t>0$ small
    enough $\Lambda_t$ lies in $\Phi(U)$ and its corresponding function
    $f_t:=\Phi^{-1}(\Lambda_t)\in U$ is such that $\min f_t<0$. Thus on the one
    hand $\ell_-^\alpha(\Lambda_t,\Lambda_0)<0$ by Theorem \ref{thm}. But on the
    other hand $\ell_-^\alpha(\Lambda_t,\Lambda_0)\geq 0$ since
    $\Lambda_0\cleq\Lambda_t$ which brings the contradiction.
\end{proof}

\subsection{Geodesics}\label{sec:intro:geodesic}

Note that Theorem \ref{thm} directly
implies that ``straight'' paths $t\mapsto\Phi(tf)$ are minimizing geodesics for the
spectral distance. More precisely recall that in a pseudo-metric space $(X,d)$ the
length induced by the pseudo-distance $d$ of a continuous curve $\gamma : [a,b]\to X$,
for some real numbers $a\leq b$ is defined as follows 
\begin{equation}\label{eq:Length}
\mathrm{Length}_d(\gamma):=\sup\left\{\sum\limits_{i=1}^k
d(\gamma(t_{i-1}),\gamma(t_i))\ |\ k\in\N\ , a=t_0<\cdots< t_k=b\right\}.
\end{equation}
In the following $I\subset\R$ will denote an interval and a path $\gamma :I\to X$ is by definition a continuous map. 
\begin{definition}
    A path $\gamma : I\to X$ is a \emph{minimizing geodesic} if
$d(\gamma(a),\gamma(b))=\mathrm{Length}_d(\gamma|_{[a,b]})$ for all $a,b\in I$ such that $a<b$.  A path $\gamma:
    I\to X$ is a \emph{geodesic} if for all $t\in I$ there exists a
    neighborhood $J\subset I$ of $t$ such that
    $\gamma|_{J}$ is a minimizing geodesic.
\end{definition}
  
  \begin{rem}\label{rem:longueur spectral et shelukhin}\
  \begin{enumerate}
      \item In our situation we are interested only in the subset of paths
          consisting of smooth isotopies. Corollary \ref{cor:shel} implies that
          the SCH-length functional and spectral length functional agree on
          smooth isotopies (see Section~\ref{sec:SHdist}).
  \item In our situation it should also be possible to define geodesics as critical points of
      the length functional. Indeed, even if it is not clear that the length functional
      is smooth in general, it should be the case at paths satisfying
      the previous definition (see \cite[Chapter 12]{polterovich} and \cite[Section 1.2]{geometricvariants}).
  \end{enumerate}
  \end{rem}
  The main result of this section consists of giving a complete
  characterization of smooth geodesics of the spectral and SCH
  distances. To do so we introduce the following notion
  extending the terminology introduced by Bialy-Polterovich
  \cite[Definition 1.3.C]{BiaPol1994} (see also Section \ref{ssec:conventions}
  for the definition of Hamiltonian maps).
  
  \begin{definition}\label{def:quasiautonome}
    A Legendrian isotopy $(\Lambda_t)_{t\in I}$ is
    \emph{$\alpha$-quasi-autonomous} if there exist $\epsilon\in\{\pm1\}$ and a continuous path
    $(x_t)$ of points belonging to a same $\alpha$-Reeb orbit such that
    $x_t\in \Lambda_t$ and $\epsilon H_t(x_t) = \max | H_t|$, $\forall t\in I$ where
    $(H_t)$ denotes the $\alpha$-Hamiltonian map of $(\Lambda_t)$.\\
    A Legendrian isotopy $(\Lambda_t)_{t\in I}$ is
    \emph{locally} $\alpha$-quasi-autonomous if for all $t\in I$
    there exists a neighborhood $J\subset I$ of $t$
    such that $(\Lambda_t)_{t\in J}$ is $\alpha$-quasi-autonomous.
\end{definition}

\begin{thm}\label{thm:geo}
  A Legendrian isotopy $(\Lambda_t)$ in an orderable $\Leg$ (resp. $\uLeg$) is
  a geodesic for $\ud^\alpha$ if and only if it is $\alpha$-quasi-autonomous,
  where $\ud^\alpha$ denotes either $\dspec^\alpha$ or $\dSCH^\alpha$. 
\end{thm}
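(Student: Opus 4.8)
The plan is to reduce the statement, via Theorem~\ref{thm}, to the elementary geometry of the sup-norm, and to read off the quasi-autonomy condition from the equality case in the triangle inequality.

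\emph{Reduction.} By Corollary~\ref{cor:shel} the two distances $\dspec^\alpha$ and $\dSCH^\alpha$ coincide on a $C^1$-neighborhood of every point. Since the length functional \eqref{eq:Length} is computed through arbitrarily fine partitions—whose consecutive points are eventually $C^1$-close along a smooth isotopy—and since being a geodesic is by definition a local (hence $C^1$-local) condition, the two choices of $\ud^\alpha$ give the same length functional and the same geodesics. It therefore suffices to treat $\ud^\alpha=\dspec^\alpha$. Likewise, as both ``geodesic'' and ``locally $\alpha$-quasi-autonomous'' are local conditions in $t$, I would first prove the equivalence: $(\Lambda_t)_{t\in[a,b]}$ is a minimizing geodesic if and only if it is $\alpha$-quasi-autonomous, on small intervals lying in a single Weinstein chart.

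\emph{Length in a chart.} Fix $t_0$, a Weinstein parametrization $\Phi\colon U\to\Leg$ centered at $\Lambda_{t_0}$, and the subneighborhood $U'$ provided by Theorem~\ref{thm}; set $f_t:=\Phi^{-1}(\Lambda_t)$. For $s,t$ close to $t_0$ Theorem~\ref{thm} yields $\dspec^\alpha(\Lambda_s,\Lambda_t)=\max|f_s-f_t|$, so on a small interval the spectral length of $(\Lambda_t)$ equals the length of the smooth path $t\mapsto f_t$ in the normed space $\bigl(C^\infty(\Lambda,\R),\max|\cdot|\bigr)$, that is $\int \max_q|\partial_t f_t(q)|\,\ud t$. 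Under the identification $\Lambda_t=j^1 f_t$ one has $H_t\circ j^1 f_t=\partial_t f_t$ for the $\alpha$-Hamiltonian map (with the paper's conventions), whence $\max_q|\partial_t f_t|=\max|H_t|$ and the length equals $\int\max|H_t|\,\ud t$, in accordance with Remark~\ref{rem:longueur spectral et shelukhin}.

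\emph{Equality case and Reeb orbits.} On $[a,b]$ inside the chart, minimality means $\max_q\bigl|\int_a^b\partial_t f_t(q)\,\ud t\bigr|=\int_a^b\max_q|\partial_t f_t(q)|\,\ud t$. Bounding the left-hand side by evaluation at a maximizing point $q_*$ and using $\bigl|\int\bigr|\le\int|\cdot|\le\int\max|\cdot|$, equality forces both intermediate inequalities to be equalities, which exactly means there are $q_*\in\Lambda$ and $\epsilon\in\{\pm1\}$ with $\partial_t f_t(q_*)=\epsilon\max_q|\partial_t f_t(q)|$ for all $t\in[a,b]$; equivalently $\epsilon H_t(x_t)=\max|H_t|$ with $x_t:=j^1 f_t(q_*)\in\Lambda_t$. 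Since $\Lambda$ is closed, $q_*$ is a critical point of $q\mapsto\partial_t f_t(q)$ for every $t$, so $\ud(\partial_t f_t)(q_*)=0$; exchanging derivatives gives $\partial_t\bigl(\ud f_t(q_*)\bigr)=0$, i.e. the fiber coordinate $\ud f_t(q_*)$ is independent of $t$. Hence the points $x_t=(q_*,\ud f_t(q_*),f_t(q_*))$ all share the same $(q,p)$-coordinates and lie on a single Reeb orbit of $\alpha_0=\ud z-p\cdot\ud q$, which is precisely $\alpha$-quasi-autonomy. Reading the same chain backwards shows that the existence of such $q_*,\epsilon$ implies minimality, establishing the local equivalence.

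\emph{Globalization and main obstacle.} Being a geodesic means every $t\in I$ has a neighborhood $J$ on which $\gamma|_J$ is minimizing; applying the previous step to $J$ \emph{itself} already produces a single pair $(q_*,\epsilon)$ valid for all $t\in J$, i.e. local $\alpha$-quasi-autonomy, with no limiting argument needed. The delicate point—and the main obstacle—is to upgrade these local maximizing Reeb orbits to a globally consistent one over the connected interval $I$: a single sign $\epsilon$ and a single Reeb orbit meeting every $\Lambda_t$. I would show that $\epsilon$ is locally constant, hence constant on $I$, and that the closed sets of admissible maximizers on overlapping subintervals are compatible, so that a continuation argument along $I$ selects a global $q_*$. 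The genuine difficulties are the \emph{degenerate times}, where $\partial_t f_t$ attains its extreme value at several points or where $H_t\equiv0$; these I would treat by the upper semicontinuity of the maximizer sets $t\mapsto\{q:\partial_t f_t(q)=\epsilon\max_q|\partial_t f_t|\}$ together with compactness of $\Lambda$, ensuring the global intersection over $I$ remains nonempty.
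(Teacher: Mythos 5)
Your reduction to a Weinstein chart, the computation of the length as $\int\max|\partial_t f_t|\,\ud t$, the equality-case analysis of $\bigl|\int\bigr|\le\int|\cdot|\le\int\max|\cdot|$ producing a fixed pair $(q_*,\epsilon)$, and the observation that $\ud(\partial_t f_t)(q_*)=0$ forces $t\mapsto \ud f_t(q_*)$ to be constant (hence the points $x_t$ lie on one Reeb orbit of $\alpha_0$) reproduce exactly the paper's proof, which packages these steps as Lemma~\ref{lem:int} and Lemma~\ref{lem:geo} and then transports them by the $C^1$-local isometry of Theorem~\ref{thm}. Up to and including your third paragraph the argument is correct and essentially identical to the paper's.

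The final ``globalization'' paragraph, however, is a genuine misstep. Since \emph{geodesic} is by definition a local condition in $t$, the statement to prove is the equivalence with \emph{local} $\alpha$-quasi-autonomy (Definition~\ref{def:quasiautonome}); the paper's proof stops exactly where your third paragraph ends, and no continuation argument is needed. More importantly, the global upgrade you propose --- a single sign and a single Reeb orbit meeting every $\Lambda_t$ over all of $I$ --- is false for geodesics that are not minimizing, so the ``main obstacle'' you identify cannot be overcome. Concretely, in $J^1N$ take $F,G\ge 0$ smooth with disjoint supports and $\max F=\max G=1$, and let $\partial_t f_t=a(t)F+b(t)G$ with $a\equiv 1$ on $[0,2/3]$, $a<1$ after, and $b\equiv 1$ on $[1/3,1]$, $b<1$ before. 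Then $\max|\partial_t f_t|\equiv 1$, the maximizer is $q_F$ on $[0,2/3]$ and $q_G$ on $[1/3,1]$, so every $t$ admits a quasi-autonomous neighborhood and $(j^1f_t)$ is a geodesic; yet no single point works for all $t$ (at $t=0$ it must lie in $\{F=1\}$, at $t=1$ in $\{G=1\}$), and indeed $\max|f_1-f_0|=\max\bigl(\int a,\int b\bigr)<1=\int\max|\partial_t f_t|\,\ud t$, so the path is not minimizing. Your proposed continuation via upper semicontinuity of the maximizer sets fails precisely because their intersection over $I$ can be empty. Dropping that last paragraph leaves a complete and correct proof.
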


The above Theorem \ref{thm:geo} and Definition \ref{def:quasiautonome} have
their direct analogue for contact isotopies (see Section~\ref{sec:contacto}).

\subsection{Discussion}

In the symplectic context similar statements have been proven for the Hofer
norm and for the Viterbo's type spectral norms \cite{Ohnorm,schwarz,Vit} on the
group of Hamiltonian symplectomorphisms
\cite{BiaPol1994,lalondemcduff12,geometricvariants,Ohplat}. Bialy-Polterovich \cite{BiaPol1994} characterized the geodesics of the Hofer norm of
compactly supported Hamiltonian symplectomorphisms of the standard symplectic
Euclidean space after proving the local flatness. Lalonde-McDuff
\cite{lalondemcduff12} went the other way around by first characterizing
geodesics of symplectic manifolds that do not admit short loops, i.e. $0$ is an
isolated point of the Hofer length spectrum, and derive from it local flatness
for this family of symplectic manifolds. Finally McDuff \cite{mcduffsalamon}
was able to get rid of the assumption about the non-existence of short loops and
thus characterized in full generality the geodesics and proved the local
flatness of the Hofer norm for all closed symplectic manifolds. To do so, she
generalized the non-squeezing theorem of Gromov \cite{Gro85} to non-trivial
symplectic fibrations over the $2$-sphere. Her proof involves technical tools such
as Seidel morphism and Gromov-Witten invariants.

Surprisingly enough our proofs of local flatness and characterization of
geodesics in the contact context involve only elementary arguments relying
essentially on the axiomatic properties of the Legendrian spectral distance
that we list below (see Theorem \ref{thm:LegSS}). As in
\cite{allais2023spectral}, the explanation of this surprising ease comes from
the orderability assumption. Indeed, non trivial machineries, such as Floer
Homology or generating functions techniques, are hidden behind this assumption.
Nevertheless once this assumption has been made, the contact spectral distance
is well defined and seems easier and more intuitive to handle than its
symplectic cousins.

As illustrated by the present article, it is very common that one adapts
statements from symplectic geometry to statements in contact geometry. However
according to the previous paragraph it should also be interesting in the future
to go the other way around and see whether one can use the contact spectral
distance to shed some light on the Hofer or Viterbo's type distances of
symplectic geometry.

\subsection*{Organization of the article}
In Section~\ref{sec:prelim}, we fix the notations and the conventions on the
objects that we will use throughout the paper.
In Section~\ref{sec:proofMain}, we prove Theorem~\ref{thm} and
Corollary~\ref{cor:shel}. In Section~\ref{sec:preuvegeo},
we prove Theorem~\ref{thm:geo} characterizing geodesics of the
spectral distance.
Finally, in Section~\ref{sec:contacto}, the extension to universal covers of some
contactomorphism groups is discussed.

\subsection*{Acknowledgment} 
The second author wants to thank Sheila Sandon for introducing him to symplectic geometry with the paper of Bialy-Polterovich \cite{BiaPol1994}. We both want to thank Alberto Abbondandolo, Lukas Nakamura, Stefan Nemirovski and Sheila Sandon for their questions and remarks that have motivated and enlightened this work.\\
The first author was funded by the postdoctoral
fellowships of the \emph{Interdisciplinary Thematic Institute IRMIA++}, as part
of the \emph{ITI} 2021-2028 program of the University of Strasbourg, CNRS and
Inserm, supported by \emph{IdEx Unistra (ANR-10-IDEX-0002)} and by
\emph{SFRI-STRAT’US project (ANR-20-SFRI-0012)} under the framework of the
\emph{French Investments for the Future Program}.  The second author is
partially supported by the \emph{Deutsche Forschungsgemeinschaft} under the
\emph{Collaborative Research Center SFB/TRR 191 - 281071066 (Symplectic
Structures in Geometry, Algebra and Dynamics)}.

\section{Preliminaries}\label{sec:prelim}

\subsection{Conventions}\label{ssec:conventions}

Let $(M,\xi)$ be a cooriented contact manifold and fix a contact form $\alpha$
supporting $\xi$ and its coorientation whose Reeb vector field is complete and
denote by $\phi_t^\alpha\in\conto(M,\xi)$ its flow at time $t\in\R$. Here
$\conto(M,\xi)$ denotes the group of contactomorphisms isotopic to the
identity. We denote by $\Gcont$ the
group of contactomorphisms isotopic to the identity through compactly supported
contactomorphisms and we endow it with the $C^1$-topology. We denote by
$\uGcont$ its universal cover and $\Pi :\uGcont\to\Gcont$ the covering map. 
By a slight abuse of notation, we still call the identity and
denote $\id\in\uGcont$ the class of the constant
isotopy $s\mapsto \id$.

Fix a closed Legendrian $\Lambda\subset M$ and denote by
$\Leg(\Lambda)=\{\phi(\Lambda)\ |\ \phi\in\conto(M,\xi)\}$ (or simply $\Leg$)
its isotopy class that we endow with the $C^1$-topology. We denote by $\uLeg$
its universal cover and $\Pi :\uLeg\to\Leg$ the covering map.

Everywhere in the paper, $I\subset\R$ will denote an interval.
Let us recall that a Legendrian isotopy $(\Lambda_t)_{t\in I}$
in $\Leg$ is a path of Legendrian submanifolds such that there exists
a smooth map $j:I\times \Lambda\to M$ whose restriction $j_t$ to $\{ t\}\times \Lambda$
is an embedding onto $\Lambda_t$ for all $t\in I$.
The $\alpha$-Hamiltonian map of $(\Lambda_t)$ is the family of maps
$(h_t : \Lambda_t\to\R)$ defined by $h_t\circ j_t = \alpha(\partial_t j_t)$
for any smooth parametrization $j$.
Given a (smooth) contact isotopy $(\phi_t)\subset \conto(M,\xi)$, we recall that
its $\alpha$-Hamiltonian map $h:I\times M\to\R$ is defined by
$h_t\circ\phi_t = \alpha(\partial_t \phi_t)$.
 We say that a path
$(\Lambda_t)\subset\uLeg$ (resp. $(\phi_t)\subset\uGcont$) is an isotopy if its
projection to $\Leg$ (resp. $\Gcont$) is an isotopy and we define its
$\alpha$-Hamiltonian to be the $\alpha$-Hamiltonian of its projection. 
  
On $O$ being either $\Gcont,\uGcont,\Leg$ or $\uLeg$ we write $x\cleq y$, or
equivalently $y\cgeq x$, if there exists a non-negative isotopy from $x$ to
$y$, \emph{i.e.} an isotopy whose Hamiltonian is non-negative. $O$ is called
orderable if and only if $\cleq$ defines a partial order. 
This relation has been introduced by Eliashberg-Polterovich \cite{EP00} and has
been widely studied
\cite{allais2023spectral,bhupal,chernovnemirovski2,CFP,EKP,Sandon2010}.

\subsection{Hofer type distances}
\label{sec:SHdist}

The Shelukhin-Chekanov-Hofer (resp. Shelukhin-Hofer) length functional is a
functional defined on the space of Legendrian isotopies $(\Lambda_t)\subset\Leg$
(resp. contact isotopies $(\phi_t)\subset\Gcont$) as follows: the
length of any isotopy of $\alpha$-Hamiltonian $(H_t)_{t\in I}$ is given by 
\begin{equation*}
\int_I\max |H_t|dt,
\end{equation*}
(see also \cite[Section 7]{RosenZhang2020} and \cite{shelukhin}).
We denote this length functional by $\LSCH^\alpha$ (resp. $\LSH^\alpha$).
The Shelukhin-Chekanov-Hofer (SCH) pseudo-distance $\dSCH^\alpha$ on $\Leg$ or $\uLeg$ 
is defined for any $\Lambda_0,\Lambda_1\in\Leg$ or $\uLeg$
as
\[
    \dSCH^\alpha(\Lambda_1,\Lambda_0):=\inf\{\LSCH^\alpha(\Lambda_t)\ |\
    (\Lambda_t)\subset\Leg \text{ or }\uLeg \text{ joining $\Lambda_0$ to
$\Lambda_1$}\}.
\]
The Shelukhin-Hofer pseudo-norm $\NSH{\cdot}^\alpha$ on $\Gcont$ or $\uGcont$
is defined for any $\phi\in\Gcont$ or $\uGcont$ as
\[
    \NSH{\phi}^\alpha=\inf\{\LSH^\alpha(\phi_t)\ |\ (\phi_t)_{t\in [0,1]}\subset \Gcont \text{
or } \uGcont \text{ such that } \phi_0=\id \text{ and
}\phi_1=\phi\}.
\]

When $M$ is closed Shelukhin \cite{shelukhin} showed that $\NSH{\cdot}^\alpha$
is a genuine norm on $\Gcont$. Hedicke \cite{Hedicke2022} showed that
$\dSCH^\alpha$ is a genuine distance when $\Leg$ is orderable (even for
non-closed $M$).

\begin{rem}\label{rem:longueur de shelukhin}
    Note that Theorem \ref{thm} together with Corollary \ref{cor:shel} imply
    that $\mathrm{Length}_{\dSCH^\alpha}$ and $\mathrm{Length}_{\dSH^\alpha}$
    (as defined in (\ref{eq:Length})) restricted to isotopies correspond
    respectively to $\LSCH^\alpha$ and $\LSH^\alpha$, where $\dSH^\alpha$
    denotes the right-invariant distance associated with the norm
    $\NSH{\cdot}^\alpha$. See also  \cite[Proposition 1.6]{gromovmetrique} and
    the remark following it. 
\end{rem}

\subsection{Order spectral selectors and induced distances}
\label{sec:spec}

Following \cite{allais2023spectral} let us define
two functions $\ell_\pm^\alpha:\Leg\times\Leg\to\R\cup\{\mp\infty\}$ (resp.
$\uLeg\times\uLeg\to\R\cup\{\mp\infty\}$) by
\[
\ell_+^\alpha(\Lambda_1,\Lambda_0):=\inf\{t\in\R\ |\
\Lambda_1\cleq\phi_t^\alpha\cdot\Lambda_0\} \text{ and }
\ell_-^\alpha(\Lambda_1,\Lambda_0):=\sup\{t\in\R\ |\
\phi_t^\alpha\cdot\Lambda_0\cleq\Lambda_1\},
\]
for $\Lambda_1,\Lambda_0\in\Leg$ (resp. $\Lambda_1,\Lambda_0\in\uLeg$), where
$\phi_t^\alpha\cdot\Lambda_0$ denotes the natural action of the Reeb flow at
time $t\in\R$ on $\Lambda_0\in\Leg$ (resp. $\Lambda_0\in\uLeg$). Recall that the $\alpha$-spectrum of $(\Lambda_1,\Lambda_0)\in\Leg^2$ (resp. $\uLeg^2$) is the set of lengths of $\alpha$-Reeb chords joining $\Lambda_0$ to $\Lambda_1$ (resp. $\Pi(\Lambda_0)$ to $\Pi(\Lambda_1)$) that is
\[\spec^\alpha(\Lambda_1,\Lambda_0):=\{t\in\R\ | \ \Lambda_1\cap \phi_t^\alpha\Lambda_0\ne\emptyset\} \text{ (resp. } \spec^\alpha(\Lambda_1,\Lambda_0):=\spec^\alpha(\Pi\Lambda_1,\Pi\Lambda_0)\text{)}.\]

\begin{thm}[\cite{allais2023spectral}]\label{thm:LegSS}
    The maps $\ell_\pm^\alpha$ are real-valued if and only if $\Leg$ (resp. $\uLeg$) is orderable. Moreover when real valued they satisfy the
    following properties for every $\Lambda_0,\Lambda_1,\Lambda_2\in\Leg$ (resp. $\uLeg$),
    \begin{enumerate}[ 1.]
        \item (normalization) $\ell_\pm^\alpha(\Lambda_0,\Lambda_0) = 0$
            and $\ell_\pm^\alpha(\phi^\alpha_t \Lambda_1,\Lambda_0)
            = t + \ell_\pm^\alpha(\Lambda_1,\Lambda_0)$, $\forall t\in\R$,
        \item (monotonicity) $\Lambda_2\cleq \Lambda_1$ implies
            $\ell_\pm^\alpha(\Lambda_2,\Lambda_0) \leq \ell_\pm^\alpha(\Lambda_1,\Lambda_0)$,
        \item (triangle inequalities) $\ell_+^\alpha(\Lambda_2,\Lambda_0) \leq
            \ell_+^\alpha(\Lambda_2,\Lambda_1) + \ell_+^\alpha(\Lambda_1,\Lambda_0)$
            and $\ell_-^\alpha(\Lambda_2,\Lambda_0) \geq \ell_-^\alpha(\Lambda_2,\Lambda_1)
            + \ell_-^\alpha(\Lambda_1,\Lambda_0)$,
        \item (Poincaré duality) $\ell_+^\alpha(\Lambda_1,\Lambda_0) =
            - \ell_-^\alpha(\Lambda_0,\Lambda_1)$,
        \item (compatibility) $\ell_\pm^\alpha(\varphi(\Lambda_1),\varphi(\Lambda_0))
            = \ell_\pm^{\varphi^*\alpha}(\Lambda_1,\Lambda_0)$,
            for every $\varphi$ in $\conto(M,\xi)$
            (resp. in its universal cover),
        \item (non-degeneracy) $\ell_+^\alpha(\Lambda_1,\Lambda_0) = \ell_-^\alpha(\Lambda_1,\Lambda_0) = t$
            for some $t\in\R$ implies $\Lambda_1 = \phi_t^\alpha \Lambda_0$
            (resp. it only implies the equality
            $\Pi \Lambda_1 = \phi_t^\alpha\Pi \Lambda_0$ in $\Leg$).
        \item (spectrality) $\ell_\pm^\alpha(\Lambda_1,\Lambda_0) \in
            \spec^\alpha(\Lambda_1,\Lambda_0)$.
    \end{enumerate}
\end{thm}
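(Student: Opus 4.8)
The plan is to deduce the seven properties from two structural features of the $\alpha$-Reeb flow, leaving spectrality---the only genuinely geometric statement---for last.

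\emph{Two engines.} First, since the Reeb field satisfies $\mathcal{L}_{R_\alpha}\alpha=0$, its flow preserves $\alpha$; conjugating a non-negative isotopy by $\phi_t^\alpha$ therefore leaves its $\alpha$-Hamiltonian unchanged, so the relation $\cleq$ is $\phi_t^\alpha$-invariant. (As the sign of a Hamiltonian depends only on the coorientation, $\cleq$ is in fact independent of $\alpha$, which I will use for compatibility.) Second, the $\alpha$-Hamiltonian of $\phi_t^\alpha$ is the constant $1$, and a one-line computation using $(\phi_t^\alpha)^*\alpha=\alpha$ shows that an isotopy $(\Lambda_s)$ of Hamiltonian $(H_s)$, corrected to $(\phi_{g(s)}^\alpha\Lambda_s)$, has Hamiltonian $g'(s)+H_s$. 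From this I get Reeb monotonicity, $\phi_s^\alpha\Lambda\cleq\phi_t^\alpha\Lambda$ for $s\le t$, and the fact that $S_+:=\{t:\Lambda_1\cleq\phi_t^\alpha\Lambda_0\}$ and $S_-:=\{t:\phi_t^\alpha\Lambda_0\cleq\Lambda_1\}$ are upward- resp. downward-closed. Choosing $g$ with $g'\ge-\min_\Lambda H_s$ along a path joining $\Lambda_1$ to $\Lambda_0$ makes the corrected path non-negative, so $S_\pm\neq\emptyset$; this already gives $\ell_+^\alpha<+\infty$ and $\ell_-^\alpha>-\infty$.

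\emph{Finiteness and properties 1--5.} For the remaining finiteness I would argue that if $S_+=\R$ then, combining $\Lambda_1\cleq\phi_t^\alpha\Lambda_0$ (all $t$) with some $\phi_{t_-}^\alpha\Lambda_0\cleq\Lambda_1$ coming from $S_-\neq\emptyset$ and Reeb monotonicity, antisymmetry would force $\phi_s^\alpha\Lambda_0=\Lambda_0$ on an interval of $s$, i.e. $R_\alpha$ tangent to $\Lambda_0$, contradicting the Legendrian condition; hence orderability gives $\ell_\pm^\alpha\in\R$. Conversely, if $\cleq$ is not antisymmetric one obtains a non-constant non-negative loop and, by the standard positivity argument, a relation $\Lambda\cleq\phi_{-\delta}^\alpha\Lambda$ with $\delta>0$; iterating gives $\ell_+^\alpha(\Lambda,\Lambda)=-\infty$. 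Properties 1--5 are then formal: normalization and the shift $\ell_\pm^\alpha(\phi_t^\alpha\Lambda_1,\Lambda_0)=t+\ell_\pm^\alpha(\Lambda_1,\Lambda_0)$ follow from $\phi_t^\alpha$-invariance and the exclusion of negative Reeb self-domination; monotonicity and the triangle inequalities follow from transitivity of $\cleq$ together with the inclusions of the $S_\pm$; Poincaré duality is the equivalence $\phi_t^\alpha\Lambda_1\cleq\Lambda_0\iff\Lambda_1\cleq\phi_{-t}^\alpha\Lambda_0$; and compatibility follows from $\phi_t^\alpha\circ\varphi=\varphi\circ\phi_t^{\varphi^*\alpha}$ together with the $\varphi$-invariance of $\cleq$.

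\emph{Spectrality (the crux) and non-degeneracy.} The hard part is property 7, and I would reduce it to the following \textbf{gap lemma}: if two disjoint closed Legendrians satisfy $A\cleq B$, then $\phi_\epsilon^\alpha A\cleq B$ for some $\epsilon>0$. Granting this, set $t_+:=\ell_+^\alpha(\Lambda_1,\Lambda_0)$ and suppose $\Lambda_1\cap\phi_{t_+}^\alpha\Lambda_0=\emptyset$; by compactness disjointness persists for $s\in[t_+-\delta,t_++\delta]$, and applying the lemma to $A=\Lambda_1\cleq B=\phi_{t_++\delta/2}^\alpha\Lambda_0$ yields $\Lambda_1\cleq\phi_{t_++\delta/2-\epsilon}^\alpha\Lambda_0$ with $t_++\delta/2-\epsilon<t_+$, contradicting the infimum; hence $t_+\in\spec^\alpha(\Lambda_1,\Lambda_0)$, and $\ell_-^\alpha$ is handled by duality. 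The lemma itself I would prove by passing to a Weinstein neighborhood and the $1$-jet picture of Remark~\ref{rem:1 jet}: there $A\cleq B$ reads $f\le g$, disjointness forces the strict gap $f<g$, whence $f+\epsilon\le g$, i.e. $\phi_\epsilon^\alpha A\cleq B$. \emph{The main obstacle is precisely making this local gap global}: a non-negative isotopy realizing $A\cleq B$ need not be strictly positive, so the $\epsilon$ of Reeb room must be extracted from the global gap between $A$ and $B$ rather than read off near $B$---this is where the orderability hypothesis does its real work, and also where (via the closedness of $S_\pm$ that the same argument yields) the infimum defining $\ell_+^\alpha$ is seen to be attained. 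Finally, non-degeneracy follows from this attainment: if $\ell_+^\alpha=\ell_-^\alpha=t$ then, after the shift reduction to $t=0$, attainment gives $\Lambda_1\cleq\Lambda_0$ and $\Lambda_0\cleq\Lambda_1$, so antisymmetry yields $\Lambda_1=\Lambda_0$ in $\Leg$ (in $\uLeg$ one only concludes the equality after projecting by $\Pi$).
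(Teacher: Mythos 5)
This theorem is imported verbatim from \cite{allais2023spectral}; the present paper contains no proof of it, so your attempt has to be judged on its own terms rather than against an argument in the text. Your treatment of finiteness and of properties 1--5 is essentially correct and standard, though the converse direction (non-orderability forces $\ell_\pm^\alpha\equiv\mp\infty$) silently invokes the nontrivial upgrade of a non-constant non-negative loop to a positive one, which deserves at least a citation. The real problems are in spectrality and non-degeneracy. Your ``gap lemma'' carries the entire weight of spectrality and you do not prove it: the $1$-jet picture only applies to $C^1$-close Legendrians, and you explicitly flag the globalization as ``the main obstacle'' without resolving it. The missing idea is an ambient push-off, which is exactly the mechanism of Lemma~\ref{lem:spectralite} of this paper: if $\Lambda_1\cap\phi^\alpha_u\Lambda_0=\emptyset$ for all $u$ in a compact interval $[a,b]$, then $\Lambda_1$ is disjoint from the compact trace $\bigcup_{u\in[a,b]}\phi^\alpha_u\Lambda_0$, so there is a compactly supported contactomorphism, isotopic to the identity through contactomorphisms supported away from $\Lambda_1$, carrying $\phi^\alpha_b\Lambda_0$ to $\phi^\alpha_a\Lambda_0$; applying it to a non-negative isotopy realizing $\Lambda_1\cleq\phi^\alpha_b\Lambda_0$ preserves non-negativity (compatibility of $\cleq$ with contactomorphisms) and gives $\Lambda_1\cleq\phi^\alpha_a\Lambda_0$. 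Note this uses no orderability, contrary to your remark that this is ``where the orderability hypothesis does its real work.'' Moreover, even granting your lemma as stated, the deduction has a quantifier error: the lemma yields \emph{some} $\epsilon>0$, nothing forces $\epsilon>\delta/2$, so $t_++\delta/2-\epsilon$ need not undercut $t_+$, and iterating does not help since the successive $\epsilon$'s may sum to less than $\delta/2$. You need the interval version above, which descends from $t_++\delta/2$ to $t_+-\delta/2$ in one step.

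The second gap is your claim that ``the same argument yields'' closedness of $S_\pm$, i.e.\ attainment $\Lambda_1\cleq\phi^\alpha_{\ell_+^\alpha}\Lambda_0$, on which your whole proof of non-degeneracy rests. It does not follow: at $t=\ell_+^\alpha$ spectrality guarantees a Reeb chord, so the disjointness hypothesis of your gap lemma fails exactly there and the push-off gives nothing. Attainment is precisely membership in the analogues of the sets $\mathcal{E}_\pm$ of Section~\ref{sec:contacto}, and the paper explicitly states that it ``seems unlikely'' that these exhaust the whole group; treating it as automatic is not an option. Non-degeneracy therefore needs a different argument (for instance a non-negative perturbation localized near a point of $\Pi\Lambda_0\setminus\Pi\Lambda_1$, used to strictly move one of the selectors and contradict $\ell_+^\alpha=\ell_-^\alpha$), not the antisymmetry-after-attainment route you propose.
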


As a consequence the map $\dspec^\alpha:=\max\{\ell_+^\alpha,-\ell_-^\alpha\}$
is a distance (resp. a pseudo-distance) on $\Leg$ (resp. $\uLeg$) whenever it
is orderable. Thanks to the last property of Theorem \ref{thm:LegSS} we call
this (pseudo-)distance the Legendrian spectral distance. 
A consequence of normalization and monotonicity properties are the
following inequalities.
If $(\Lambda_t)_{t\in[0,1]}$ is an isotopy of $\Leg$ (resp. $\uLeg$), then
\begin{equation}\label{eq:LselectorsHam}
    \int_0^1 \min H_t\ud t \leq \ell_-^\alpha(\Lambda_1, \Lambda_0)
    \leq \ell_+^\alpha(\Lambda_1, \Lambda_0)
    \leq\int_0^1 \max H_t\ud t,
\end{equation}
where $(H_t)$ is the associated $\alpha$-Hamiltonian map 
\cite[Lemma~3.3]{allais2023spectral}.
In particular, the spectral distance is dominated by the SCH distance:
\begin{equation}\label{eq:dspecdSCH}
    \dspec^\alpha \leq \dSCH^\alpha,
\end{equation}
which subsequently implies the $C^1$-continuity of $\ell_\pm^\alpha$
\cite[Corollary~3.4]{allais2023spectral}.

Assuming that $M$ is closed, similarly we defined in \cite{allais2023spectral,ArlovePhD}
two functions $c_\pm^\alpha : \Gcont\to\R\cup\{\mp\infty\}$ (resp.
$\uGcont\to\R\cup\{\mp\infty\}$) 
\[
    c_+^\alpha(\phi)=\inf\{t\in\R\ |\ \phi\cleq\phi_t^\alpha\}\quad
    \text{ and }
    \quad c_-^\alpha(\phi)=\sup\{t\in\R\ |\ \phi_t^\alpha\cleq\phi\}.
\]
We showed that the maps $c_\pm^\alpha$ take values in $\R$ if and only if
$\Gcont$ (resp. $\uGcont$) is orderable and moreover $c_\pm^\alpha$ satisfy
properties analogous to the ones of $\ell_\pm^\alpha$ except for the
spectrality. Let us recall that the $\alpha$-spectrum of $\phi\in\Gcont$ (resp.
$\phi\in\uGcont$) is the set of translations of its $\alpha$-translated points
that is
\begin{equation}\label{eq:Gspectrum}
    \spec^\alpha(\phi):=\{t\in\R\ |\ \exists x\in M,\ \phi(x)=\phi_t^\alpha(x),\
    (\phi^*\alpha)_x=\alpha_x\}
\end{equation}
\[\text{(resp. }\spec^\alpha(\phi):=\spec^\alpha(\Pi(\phi))\text{)}.\]
Nevertheless this allows us to define a norm (resp. a pseudo-norm)
\[
    \Nspec{\cdot}^\alpha:=\max\{c_+^\alpha,-c_-^\alpha\}
\]
that we still call the
spectral norm on $\Gcont$ (resp. $\uGcont$). 
Since normalization and monotonicity are still satisfied by $c_\pm^\alpha$, one
has analogues of (\ref{eq:LselectorsHam}) and (\ref{eq:dspecdSCH}).
Given $\phi\in\Gcont$ (resp. $\in\uGcont$),
\begin{equation}\label{eq:GselectorsHam}
    \int_0^1 \min H_t\ud t \leq c_-^\alpha(\phi)
    \leq c_+^\alpha(\phi)
    \leq\int_0^1 \max H_t\ud t,
\end{equation}
for any $\alpha$-Hamiltonian map $(H_t)$ generating $\phi$.
As a consequence $\Nspec{\cdot}^\alpha \leq \NSH{\cdot}^\alpha$.

\section{Proof of Theorem \ref{thm}}\label{sec:proofMain}

In order to prove Theorem~\ref{thm} let us assume $\Leg$ (resp. $\uLeg$)
to be orderable and let us fix once for all $\Lambda$ in it.
Let us be more explicit on the construction of the parametrization
$\Phi$ centered at $\Lambda$ in order to fix notation.
By the Weinstein neighborhood theorem, there exists $\Psi$ a diffeomorphism
between a neighborhood $\mathcal{V}\subset M$ of $\Lambda$ and a neighborhood
$V\subset J^1\Lambda$ of the $0$-section $j^10$ that moreover satisfies
$\Psi(\Lambda)=j^10$, and more precisely $\Psi(x)$ is the image of the $0$-section at $x$ for all $x\in\Lambda$, and $\Psi^*\alpha_0=\alpha$ where $\alpha_0=\ud z-p\cdot\ud q$ denotes the canonical $1$-form of $J^1\Lambda$. Let $U$ be a sufficiently 
$C^2$-small open neighborhood of the $0$-function in $C^\infty(\Lambda,\R)$
such that $j^1f\subset V$ for any $f\in U$. Then one can show that the map 
\[
\Phi : U\to \Leg  \quad \quad \quad f\mapsto \Psi^{-1}(j^1f)
\]
is injective and open (see for example the third paragraph of \cite{tsuboi3}
for more details).

One can have the same discussion for the universal cover $\uLeg$ of $\Leg$. Indeed in this situation, ensuring that $\Phi(U)$ is small enough, one can use the projection $\Pi : \uLeg\to\Leg$, which is a local homeomorphism, to construct the local homeomorphism $\Pi|_{\Phi(U)}^{-1}\circ\Phi : U\to\uLeg$ . By a slight abuse of notation we also denote this latter local homeomorphism by $\Phi$.

Before proving Theorem \ref{thm} let us prove the following lemmata.

\begin{lem}\label{lem1}
  Let $f\in U$. If an $\alpha$-Reeb chord between $\Lambda$ and $\Phi(f)$ of
  length $\ell\in\R$ is contained in $\mathcal{V}$, i.e.
  $\{\phi^\alpha_{t\ell}(x)\}_{t\in[0,1]}\subset\mathcal{V}$ for some
  $x\in\Lambda$ and $\phi_\ell^\alpha(x)\in\Phi(f)$, then $\ell$ is a critical
  value of $f$.
\end{lem}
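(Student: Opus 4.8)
The plan is to push the whole configuration into the Weinstein model $J^1\Lambda$ by means of $\Psi$, where the statement reduces to the defining equations of a $1$-jet; the hypothesis that the entire chord lies in $\mathcal{V}$ is exactly what allows $\Psi$ to be applied along its full length. First I would record the Reeb dynamics in the model. On $J^1\Lambda=T^*\Lambda\times\R$ with coordinates $(q,p,z)$ and $\alpha_0=\ud z-p\cdot\ud q$, a direct computation gives $\ud\alpha_0=\ud q\wedge\ud p$, so that the Reeb vector field of $\alpha_0$ is $R_{\alpha_0}=\partial_z$ and its flow is the $z$-translation $\phi^{\alpha_0}_t(q,p,z)=(q,p,z+t)$.

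Next I would check that $\Psi$ intertwines the two Reeb flows along trajectories staying in $\mathcal{V}$. Writing $R_\alpha$ for the Reeb field of $\alpha$, the identity $\Psi^*\alpha_0=\alpha$ gives $\alpha_0(\Psi_*R_\alpha)=(\Psi^*\alpha_0)(R_\alpha)=\alpha(R_\alpha)=1$, while from $\ud\alpha=\Psi^*\ud\alpha_0$ one gets $\Psi^*(\iota_{\Psi_*R_\alpha}\ud\alpha_0)=\iota_{R_\alpha}\ud\alpha=0$, and since $\Psi^*$ is injective on forms this forces $\iota_{\Psi_*R_\alpha}\ud\alpha_0=0$. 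Hence $\Psi_*R_\alpha=R_{\alpha_0}$ pointwise on $\mathcal{V}$. Consequently, for the chord $\gamma(t)=\phi^\alpha_{t\ell}(x)$, which by assumption remains in $\mathcal{V}$ for all $t\in[0,1]$, the curve $\Psi\circ\gamma$ is an integral curve of $\ell\,R_{\alpha_0}$, and therefore $\Psi(\phi^\alpha_{t\ell}(x))=\phi^{\alpha_0}_{t\ell}(\Psi(x))$ for all such $t$.

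Finally I would translate the two endpoints. Since $\Psi(\Lambda)=j^10$ we have $\Psi(x)=(q_0,0,0)$ for some $q_0\in\Lambda$, so evaluating the previous identity at $t=1$ yields $\Psi(\phi^\alpha_\ell(x))=(q_0,0,\ell)$. On the other hand $\phi^\alpha_\ell(x)\in\Phi(f)$ means $\Psi(\phi^\alpha_\ell(x))\in\Psi(\Phi(f))=j^1f$. Thus $(q_0,0,\ell)$ lies on $j^1f=\{(q,\ud f_q,f(q))\}$, which forces $\ud f_{q_0}=0$ and $\ell=f(q_0)$; that is, $q_0$ is a critical point of $f$ and $\ell$ the corresponding critical value. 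The only delicate point in the argument is the one already flagged, namely that the conjugation of Reeb flows is legitimate precisely because the chord stays inside $\mathcal{V}$; beyond this there is no genuine obstacle, the lemma being in essence the local normal-form description of Reeb chords between a Legendrian and a nearby $1$-jet.
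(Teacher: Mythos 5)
Your proof is correct and follows exactly the same route as the paper's: push the chord through $\Psi$ using that a strict contactomorphism conjugates Reeb flows, so its endpoint becomes $(q_0,0,\ell)\in j^1f$, which forces $\ud f_{q_0}=0$ and $\ell=f(q_0)$. The only difference is that you spell out the intertwining of Reeb flows, which the paper takes for granted.
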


\begin{proof}
Since $\Psi :\mathcal{V}\to V$ is a strict contactomorphism
$\Psi(\phi_{t\ell}^\alpha(x))=(x,0,t\ell)\in V\subset T^*X\times \R$. Therefore
$(x,0,\ell)\in j^1f=\{(x,\ud f(x),f(x))\ |\ x\in \Lambda\}$ which brings the
conclusion.
\end{proof}

Let $\varepsilon_0>0$ be sufficiently small so that:
\begin{enumerate}[(i)]
    \item\label{propriete1} $\phi_t^\alpha(\Lambda)\subset\mathcal{V}$ for any $t\in(-\varepsilon_0,\varepsilon_0)$
    \item there exists an open set of the form $V_{\varepsilon_0}:=\underline{V}\times (-\varepsilon_0,\varepsilon_0)\subset J^1\Lambda=T^*\Lambda\times\R$ that is contained in $V$.
\end{enumerate}

From now on, for any positive $\varepsilon\leq\varepsilon_0$ we will denote by
$U_\varepsilon$ a convex $C^2$-neighborhood of the $0$-function such that
$j^1f\in V_{\varepsilon}:=\underline{V}\times (-\varepsilon,\varepsilon)$ for
all $f\in U_\varepsilon$. 

\begin{cor}\label{cor:critval}
   Let $f\in U_{\varepsilon}$ then $\ell_\pm^\alpha(\Phi(f),\Lambda)$ are critical values of $f$. 
\end{cor}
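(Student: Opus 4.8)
The plan is to read off the result from the spectrality of the selectors (item~7 of Theorem~\ref{thm:LegSS}) together with Lemma~\ref{lem1}. Spectrality tells us that $\ell:=\ell_\pm^\alpha(\Phi(f),\Lambda)$ is the length of some $\alpha$-Reeb chord joining $\Lambda$ to $\Phi(f)$, and Lemma~\ref{lem1} turns such a chord into a critical value of $f$ \emph{provided} the chord stays inside the Weinstein neighbourhood $\mathcal{V}$. So the whole content of the proof is to guarantee that the chord selected by spectrality does not leave $\mathcal{V}$, and for that it suffices to produce the a~priori bound $|\ell|<\varepsilon_0$.

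First I would estimate the selectors using the straight path $t\mapsto\Phi(tf)$ from $\Lambda=\Phi(0)$ to $\Phi(f)$. In the Weinstein chart this path reads $t\mapsto j^1(tf)$, and evaluating $\alpha_0=\ud z-p\cdot\ud q$ on its velocity gives (after composing with the parametrization) the $\alpha$-Hamiltonian $H_t=f$, independent of $t$; in particular $\max H_t=\max f$ and $\min H_t=\min f$. Feeding this into (\ref{eq:LselectorsHam}) yields
\[
\min f\leq\ell_-^\alpha(\Phi(f),\Lambda)\leq\ell_+^\alpha(\Phi(f),\Lambda)\leq\max f.
\]
Since $f\in U_\varepsilon$ we have $j^1f\in V_\varepsilon=\underline{V}\times(-\varepsilon,\varepsilon)$, hence $|f|<\varepsilon\leq\varepsilon_0$ everywhere, so both selectors lie in $(-\varepsilon_0,\varepsilon_0)$.

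With this bound in hand I would conclude as follows. By spectrality there is $x\in\Lambda$ with $\phi_\ell^\alpha(x)\in\Phi(f)$, where $\ell=\ell_\pm^\alpha(\Phi(f),\Lambda)$ satisfies $|\ell|<\varepsilon_0$. Then $|t\ell|<\varepsilon_0$ for every $t\in[0,1]$, so property~(\ref{propriete1}) gives $\phi_{t\ell}^\alpha(x)\in\phi_{t\ell}^\alpha(\Lambda)\subset\mathcal{V}$; that is, the entire chord $\{\phi_{t\ell}^\alpha(x)\}_{t\in[0,1]}$ is contained in $\mathcal{V}$. Lemma~\ref{lem1} then shows that $\ell$ is a critical value of $f$, proving the corollary.

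The only real obstacle is the confinement of the spectral chord to $\mathcal{V}$: Lemma~\ref{lem1} is useless unless the chord stays in the chart, which is exactly what the preliminary estimate $|\ell_\pm^\alpha(\Phi(f),\Lambda)|<\varepsilon_0$ ensures. I expect the Hamiltonian computation to be routine; the conceptual point is simply that shrinking $U_\varepsilon$ forces the selectors to be small by (\ref{eq:LselectorsHam}), which in turn forces the realizing Reeb chord to remain in the Weinstein neighbourhood.
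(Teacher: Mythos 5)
Your proof is correct and follows essentially the same route as the paper's: bound the selectors via \eqref{eq:LselectorsHam} applied to the straight path (whose $\alpha$-Hamiltonian is $f$), invoke spectrality to produce a realizing Reeb chord, confine it to $\mathcal{V}$ using property~(\ref{propriete1}), and conclude with Lemma~\ref{lem1}. The only difference is that you spell out the Hamiltonian computation for $t\mapsto\Phi(tf)$, which the paper leaves implicit when citing \eqref{eq:LselectorsHam}.
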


\begin{proof}
    Since $f\in U_\varepsilon$ by (\ref{eq:LselectorsHam}) it implies that $-\varepsilon<\min
    f\leq\ell_-^\alpha(\Phi(f),\Lambda)\leq \ell_+^\alpha(\Phi(f),\Lambda)\leq
    \max f<\varepsilon$. Since $\ell_\pm:=\ell_\pm^\alpha(\Phi(f),\Lambda)$ are
    spectral values it implies that there exists $x_\pm\in\Lambda$ such that
    $\{\phi_\alpha^{t\ell_{\pm}}(x_\pm)\}$ are Reeb chords between $\Lambda$
    and $\Phi(f)$. By \eqref{propriete1} these Reeb chords are included in
    $\mathcal{V}$ and therefore we conclude by Lemma \ref{lem1}.
\end{proof}

\begin{lem}\label{lem0}
There exists a positive $\delta_0\leq\varepsilon_0$  such that for any positive $\delta\leq\delta_0$
\[\ell_\pm^\alpha(\Phi(f),\Phi(g))=\ell_\pm^\alpha(\Phi(f-g),\Lambda)\quad \text{for any }f,g\in U_\delta.\]

\end{lem}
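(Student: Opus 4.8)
The plan is to produce, for each $g\in U_\delta$, a single contactomorphism $\varphi_g\in\Gcont\subset\conto(M,\xi)$ that implements ``subtraction of $g$'' on every Legendrian entering the definitions of the selectors, and then to read off the equality of selectors directly from their definitions, using only that $\varphi_g$ preserves the relation $\cleq$. The point is that one should \emph{not} try to invoke the compatibility property of Theorem~\ref{thm:LegSS}: that would force $\varphi_g$ to be strict, which (see below) is incompatible with compact support.

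First I would work in the jet model $V\subset J^1\Lambda=T^*\Lambda\times\R$ with $\alpha_0=\ud z-p\cdot\ud q$ and consider the fibrewise translation
\[
\tau_g:(q,p,z)\longmapsto\bigl(q,\,p-\ud g(q),\,z-g(q)\bigr).
\]
A one-line computation gives $\tau_g^*\alpha_0=\alpha_0$, shows that $\tau_g$ commutes with the Reeb flow $\phi_t^{\alpha_0}:(q,p,z)\mapsto(q,p,z+t)$, and yields $\tau_g(j^1h)=j^1(h-g)$ for all $h$; in particular $\tau_g(j^1g)=j^10$ and $\tau_g\bigl(j^1(g+t)\bigr)=j^1t=\phi_t^{\alpha_0}(j^10)$. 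Moreover $s\mapsto\tau_{sg}$ is the contact flow of the Reeb-invariant Hamiltonian $(q,p,z)\mapsto-g(q)$. To globalise, I fix once and for all cut-off functions $\chi=\chi(q,p)$, equal to $1$ on a neighbourhood $W$ of the zero section of $T^*\Lambda$ and compactly supported in $\underline V$, and $\rho=\rho(z)$, equal to $1$ on $(-\varepsilon_0/2,\varepsilon_0/2)$ and compactly supported in $(-\varepsilon_0,\varepsilon_0)$. The compactly supported Hamiltonian $-\chi(q,p)\rho(z)g(q)$ generates a contact isotopy whose time-one map, transported to $M$ by conjugation with $\Psi$ and extended by the identity, is the desired $\varphi_g\in\Gcont$; by uniqueness of flows it coincides with $\tau_g$ along every trajectory that stays in $\{\chi\equiv\rho\equiv1\}$ (in the case of $\uLeg$, the explicit generating isotopy of $\varphi_g$ provides a canonical lift of its action).

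Next I would fix $\chi,\rho,W$ and then choose $\delta_0$ so small that for all $\delta\le\delta_0$ and $f,g\in U_\delta$ the construction is ``invisible'' to the relevant Legendrians. Applying \eqref{eq:LselectorsHam} to the straight-line isotopy $s\mapsto\Phi(g+s(f-g))$ confines $\ell_\pm^\alpha(\Phi(f),\Phi(g))$ to $[\min(f-g),\max(f-g)]\subset(-2\delta,2\delta)$, and likewise $\ell_\pm^\alpha(\Phi(f-g),\Lambda)$ (note $f-g\in U_{\varepsilon_0}$ for $\delta_0$ small, by $C^1$-smallness). Consequently $f$, $g$ and all pertinent translations $t$ are $C^1$-small, so the $\tau_{sg}$-trajectories of the points of $j^1f$ and of $j^1(g+t)$ remain inside $W\times(-\varepsilon_0/2,\varepsilon_0/2)$. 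On these two Legendrians we therefore have $\varphi_g=\tau_g$, whence
\[
\varphi_g\,\Phi(f)=\Phi(f-g)\qquad\text{and}\qquad\varphi_g\,\phi_t^\alpha\Phi(g)=\varphi_g\,\Phi(g+t)=\phi_t^\alpha\Lambda,
\]
using that $\Psi$ conjugates $\phi_t^\alpha$ to $\phi_t^{\alpha_0}$.

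Finally, since $\varphi_g\in\conto(M,\xi)$ it preserves non-negativity of Legendrian isotopies: if $\varphi_g^*\alpha=e^{c}\alpha$ then the $\alpha$-Hamiltonian of an image isotopy is a pointwise positive multiple ($e^{c}>0$) of the original one, hence of the same sign; thus $x\cleq y\Leftrightarrow\varphi_g(x)\cleq\varphi_g(y)$. For $t$ in an interval $J$ containing $[\min(f-g),\max(f-g)]$ in its interior this gives
\[
\Phi(f)\cleq\phi_t^\alpha\Phi(g)\iff\Phi(f-g)\cleq\phi_t^\alpha\Lambda,
\]
so the two upward-closed sets defining $\ell_+^\alpha(\Phi(f),\Phi(g))$ and $\ell_+^\alpha(\Phi(f-g),\Lambda)$ agree on $J$; as both infima lie in the interior of $J$, they coincide, and the analogous argument with suprema settles $\ell_-^\alpha$. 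I expect the genuine obstacle to be exactly this globalisation: Reeb-invariance (strictness) cannot coexist with compact support, so the whole strategy hinges on realising that strictness is never needed—only order-preservation, which survives the cut-offs—together with the bookkeeping guaranteeing that for $|t|$ up to the a priori small selector values the cut-offs do not touch any Legendrian in play.
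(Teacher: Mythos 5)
Your proof is correct and takes essentially the same route as the paper's: both construct the fibrewise translation $(q,p,z)\mapsto(q,p-\ud g(q),z-g(q))$ with contact Hamiltonian $-g(q)$ in the jet model, cut it off to obtain a compactly supported contactomorphism that still translates the relevant Legendrians and intertwines the Reeb flow on the pertinent range, bound the selectors a priori in $(-2\delta,2\delta)$, and conclude directly from the definition of $\ell_\pm^\alpha$ via order-preservation. The only (cosmetic) differences are that the paper obtains the a priori bound from Corollary~\ref{cor:critval} plus the triangle inequality rather than from \eqref{eq:LselectorsHam} applied to the straight-line isotopy, and that it records separately that in the globally strict case one could instead invoke the compatibility property of Theorem~\ref{thm:LegSS}.
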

\begin{proof}
Note that if there exists a contactomorphism $\phi\in\conto(M,\ker\alpha)$ that
commutes with the Reeb flow, or equivalently $\phi^*\alpha=\alpha$, such that 
\begin{equation}\label{eq:contstrict}
\phi(\Phi(h))=\Phi(h-g) \text{ for any } h\in U_\delta
\end{equation}
then Lemma \ref{lem0} follows from the compatibility property of Theorem
\ref{thm:LegSS}. This is the case when
$(M,\ker\alpha)=(J^1\Lambda,\ker\alpha_0)$ with the trivial
$\alpha_0$-Weinstein parametrization centered at the zero section, i.e. $\Phi(h)=j^1h$, since the contact
isotopy $(\phi_t)$, $\phi_t : (q,p,z)\mapsto (q,p-t\ud g(q),z-t g(q))$, is an
isotopy of strict contactomorphisms whose time $1$ satisfies
\eqref{eq:contstrict}. The $\alpha_0$-Hamiltonian of this isotopy is given by
the autonomous function $H : (q,p,z)\mapsto -g(q)$.

When $(M,\ker\alpha)$ is a general contact manifold and for $\delta$ small
enough, we get the desired result by cutting off the Hamiltonian function $H$
properly. More precisely let $\delta$ be small enough so that $(f_t:=f-tg)$,
$(g_t:=(1-t)g)$ and $(g_{t,s}^\alpha:=g_s+(2t-1)2\delta)\subset U$ for all
$f,g\in U_\delta$ and $t,s\in[0,1]$. Consider $\rho : J^1\Lambda\to\R$ a cutoff
function supported in $V$ such that $\rho$ is equal to $1$ on a neighborhood
containing $(j^1f_t)$, $(j^1g_t)$ and $( j^1g_{t,s}^\alpha)$. Let $K : M\to\R$
be the compactly supported $\alpha$-Hamiltonian function defined by
$x\mapsto\rho(\Psi(x)) H(\Psi(x))$ if $x\in \mathcal{V}$ and $x\mapsto 0$
otherwise. It is easy to check that its time $1$-flow $\phi$ commutes with
$\phi_\alpha^t$ on $\Phi(g)$ for $t\in [-2\delta,2\delta]$ and satisfies
\eqref{eq:contstrict} when $h$ is either $f$ or $g$. Moreover by the previous
Corollary \ref{cor:critval} and triangle inequality we get that
$\ell_\pm^\alpha(\Phi(f),\Phi(g))\in (-2\delta,2\delta)$. The conclusion
follows from the definition of $\ell_\pm^\alpha$.
\end{proof}

\begin{proof}[Proof of Theorem \ref{thm}] 
We will show Theorem \ref{thm} for $U'\to \Leg$ (resp. $U'\to\uLeg$) where $U':=U_{\delta_0/2}$ and $\delta_0$ is the positive constant of Lemma \ref{lem0}.

Remark that for any $f\in U_{\delta_0}$ convexity ensures that
    $tf\in U_{\delta_0}$ for all $t\in[0,1]$. Since the set of critical values
    $\mathrm{CV}(tf)=t\mathrm{CV}(f)$ of $tf$ is nowhere dense, we deduce by continuity of
    $\ell_+^\alpha(\cdot,\Lambda)$ (see (\ref{eq:dspecdSCH}) and below) and Corollary \ref{cor:critval} that there exists
    $x_0\in \Lambda$ a critical point of $f$ such that
    $\ell_+^\alpha(\Phi(tf),\Lambda)=tf(x_0)$. It thus remains to show that $f(x_0)=\max
    f$ for any $f\in U_{\delta_0/2}$ to get the desired equality for $\ell_+^\alpha$. 

    Let us first assume $f\geq 0$ and $f\in U_{\delta_0}$ is Morse.
    In particular $f\neq 0$.
    Let $\mathcal{B}$ be a Morse neighborhood of a maximum $x_1$ of $f$:
    there exist $\varepsilon>0$ and a diffeomorphism $Q:=(q_1,\ldots,q_n)$
    from $\mathcal{B}$ to the Euclidean ball of radius $\varepsilon$ centered at $0$
    such that $Q(x_1)=0$ and $f(x):=\max f
    -\sum q_i(x)^2$ for any $x\in\mathcal{B}$. It is then easy to
    construct a cut-off function $\rho : M\to [0,1]$ supported in $\mathcal{B}$
    such that $\rho(x_1)=1$ and $\mathrm{CV}(\rho f):=\{\max f,0\}$. 
    Note that $\rho f\leq f$ since $f\geq 0$ but $\rho f$ may not be contained
    in $U$. However there exists $\mu>0$ small enough so that $t\rho f\in U_\delta$ for all
    $t\in[0,\mu]$. Therefore thanks to Corollary \ref{cor:critval}
    $\ell_+^\alpha(\Phi(t\rho f),\Lambda)\in\{t\max f,0\}$. Moreover
    $\Phi(t\rho f)\ne\Lambda$ (resp. $\Pi(\Phi(t\rho f))\ne\Pi(\Lambda)$) for $t\in(0,\mu)$ since $f\ne 0$ therefore by
    non-degeneracy of the selectors $\ell_+^\alpha(\Phi(t\rho f),\Lambda)=t\max
    f$. Thus by monotonicity we deduce that
    $\ell_+^\alpha(\Phi(tf),\Lambda)=t\max f$.

    Let us now assume $f\in U_{\delta_0/2}$ is Morse but $-\delta_0/2<m:=\min f<0$.
    Consider then $g:=f-m\geq 0$. Remark that $\max g=\max
    f-m<\delta_0/2+\delta_0/2=\delta_0$. Therefore $g\in U_{\delta_0}$ is Morse and non-negative. So we deduce by the previous case that
    $\ell_+^\alpha(\Phi(g),\Lambda)=\max f-m$. Moreover
    $\phi_m^\alpha(\Phi(g))=\Phi(f)$ so by normalization we deduce that
    $\ell_+^\alpha(\Phi(f),\Lambda)=\max f$.

    Finally, let $f$ be any function in $U_{\delta_0/2}$. Since Morse functions are
    $C^2$-dense, one can find a sequence $(f_n)$ of Morse functions in $U_{\delta_0/2}$
    that converges to $f$ in the $C^2$-topology. Therefore $(\Phi(f_n))$
    $C^1$-converges to $\Phi(f)$. By $C^1$-continuity of $\ell_+^\alpha$ 
    (see (\ref{eq:dspecdSCH}) and below) and the
    previous cases, we deduce that $\ell_+^\alpha(\Phi(f),\Lambda)=\max f$.

    Thanks to Lemma \ref{lem0}
    $\ell_+^\alpha(\Phi(f),\Phi(g))=\ell_+(\Phi(f-g),\Lambda)=\max(f-g)$. 
    
    We deduce the analogous result for $\ell_-^\alpha$ by a similar reasoning or simply by using the Poincaré duality property, and this concludes the proof. 
\end{proof}

Let us conclude this section by deducing the $C^1$-local coincidence of
$\dspec^\alpha$ and $\dSCH^\alpha$.

\begin{proof}[Proof of Corollary~\ref{cor:shel}]
    As recalled at (\ref{eq:dspecdSCH}), one always
    has $\dspec^\alpha\leq\dSCH^\alpha$.
    In the neighborhood of $\Lambda = \Phi(0)$, one has
    $\dspec^\alpha(\Phi(f),\Phi(g)) = \max |f-g|$.
    The $\alpha_0$-Hamiltonian map associated with $(j^1 f_t)$ for
    $f_t := (1-t)f + tg$, $t\in [0,1]$, is $(q,p,z)\mapsto g(q)-f(q)$,
    therefore $\LSCH^{\alpha_0}(j^1 f_t) = \max |f-g|$.
    Since the SCH-length of an isotopy is invariant under strict contactomorphism
    $\LSCH^\alpha(\Phi(f_t)) = \max |f-g|$.
    As a consequence, one gets the
    reverse inequality $\dSCH^\alpha(\Phi(f),\Phi(g))\leq \max |f-g|$.
\end{proof}

\section{Characterization of the geodesics}\label{sec:preuvegeo}

To prove Theorem \ref{thm:geo} let us first state and prove the two following lemmata.

\begin{lem}\label{lem:int}
    Given a continuous map $g : [a,b]_t\times N\to\R$ on
    a compact set, the
    following two conditions are equivalent:
    \begin{enumerate}
        \item $\int_a^b \max | g_t| \ud t = \max_x\left| \int_a^b g_t(x)\ud t
            \right|$,
        \item there exist $\epsilon\in\{\pm1\}$ and $x_0\in \Lambda$ such that $\forall t\in [a,b]$,
            $\epsilon g_t(x_0) = \max | g_t|$.
    \end{enumerate}
\end{lem}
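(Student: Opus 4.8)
The plan is to read condition (1) as the equality case of a universal inequality and then determine precisely when equality forces the pointwise statement (2). Write $M(t):=\max_{x\in N}|g_t(x)|$; since $N$ is compact and $g$ is continuous, $M$ is a continuous function of $t\in[a,b]$. For every fixed $x$ one has $\left|\int_a^b g_t(x)\,\ud t\right|\le\int_a^b|g_t(x)|\,\ud t\le\int_a^b M(t)\,\ud t$, and taking the supremum over $x$ (attained at some point, by compactness and continuity of $x\mapsto\int_a^b g_t(x)\,\ud t$) yields the universal bound $\max_x\left|\int_a^b g_t(x)\,\ud t\right|\le\int_a^b M(t)\,\ud t$. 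Condition (1) is exactly the assertion that this inequality is an equality.

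For the implication (2)~$\Rightarrow$~(1) I would substitute directly. If $\epsilon g_t(x_0)=M(t)$ for all $t$, then integrating gives $\epsilon\int_a^b g_t(x_0)\,\ud t=\int_a^b M(t)\,\ud t\ge 0$, hence $\left|\int_a^b g_t(x_0)\,\ud t\right|=\int_a^b M(t)\,\ud t$. Combined with the universal bound above, this forces equality in (1).

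The substantive direction is (1)~$\Rightarrow$~(2). By compactness of $N$, the continuous function $x\mapsto\left|\int_a^b g_t(x)\,\ud t\right|$ attains its maximum at some $x_0\in N$, and (1) gives $\left|\int_a^b g_t(x_0)\,\ud t\right|=\int_a^b M(t)\,\ud t$. I then choose $\epsilon\in\{\pm1\}$ so that $\epsilon\int_a^b g_t(x_0)\,\ud t=\int_a^b M(t)\,\ud t$ and rewrite this as $\int_a^b\bigl(M(t)-\epsilon g_t(x_0)\bigr)\,\ud t=0$. The key observation is that the integrand is pointwise nonnegative, since $M(t)=\max_x|g_t(x)|\ge|g_t(x_0)|\ge\epsilon g_t(x_0)$, and it is continuous in $t$; a nonnegative continuous function with vanishing integral is identically zero, so $M(t)=\epsilon g_t(x_0)$ for every $t\in[a,b]$, which is precisely~(2).

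The only point demanding care — and the main (mild) obstacle — is this last step: one must exploit both the continuity of $t\mapsto M(t)-\epsilon g_t(x_0)$ (inherited from compactness of $N$) and its pointwise sign to deduce vanishing for \emph{all} $t$ simultaneously, rather than merely almost everywhere. Crucially, the single pair $(\epsilon,x_0)$ that works uniformly in $t$ is produced automatically, because $x_0$ and $\epsilon$ are selected from the integrated quantity \emph{before} the pointwise conclusion is drawn, so no separate argument is needed to make the choice $t$-independent.
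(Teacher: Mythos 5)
Your proof is correct and follows essentially the same route as the paper's: both reduce (1) to the equality case of $\max_x\left|\int_a^b g_t(x)\,\ud t\right|\le\int_a^b\max|g_t|\,\ud t$, pick $x_0$ and $\epsilon$ realizing the left-hand maximum, and conclude from the fact that the nonnegative continuous function $t\mapsto\max|g_t|-\epsilon g_t(x_0)$ has vanishing integral. Your write-up merely spells out the continuity and compactness points that the paper leaves implicit.
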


\begin{proof}
    The implication $(2)\Rightarrow (1)$ is clear.
    Conversely, let $x_0\in N$ and $\epsilon\in\{\pm 1\}$
    be such that $\epsilon\int_a^b g_t(x_0)\ud t =
    \max_x |\int_a^b g_t(x)\ud t|$.
    Then $t\mapsto \max | g_t| - \epsilon g_t(x_0)$ is a non-negative continuous
    map the integral of which vanishes over $[a,b]$ by assumption.
    The conclusion follows.
\end{proof}

\begin{lem}\label{lem:geo}
    Let $f:[0,1]_t\times N\to\R$ be a smooth map on a closed manifold $N$. Then
    $(j^1f_t)$ is a minimizing geodesic for
    $\ud^{\alpha_0}$ if and only if it is $\alpha_0$-quasi-autonomous, where
    $\alpha_0$ denotes the canonical $1$-form of $J^1N$ and $\ud^{\alpha_0}$
    either $\dspec^{\alpha_0}$ or $\dSCH^{\alpha_0}$.
\end{lem}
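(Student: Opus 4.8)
The plan is to reduce the statement to the two ingredients already established: the explicit computation of the spectral selectors in the $1$-jet model (Remark~\ref{rem:1 jet}, giving $\ell_\pm^{\alpha_0}(j^1f,j^1g)=\max/\min(f-g)$ globally, not just $C^1$-locally), and the calculus lemma Lemma~\ref{lem:int}. Because $\ud^{\alpha_0}$ is either $\dspec^{\alpha_0}$ or $\dSCH^{\alpha_0}$, I would first observe that these two coincide here: by Remark~\ref{rem:1 jet} we have $\dspec^{\alpha_0}(j^1f,j^1g)=\max|f-g|$, and the SCH-length of the straight isotopy $(1-t)f+tg$ equals $\max|f-g|$ as in the proof of Corollary~\ref{cor:shel}, so $\dSCH^{\alpha_0}=\dspec^{\alpha_0}$ on all of the $1$-jet class and the two cases of the lemma are literally the same statement. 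This lets me work with a single distance, which I take to be $\dspec^{\alpha_0}(j^1f,j^1g)=\max|f-g|$.

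Next I would compute the length of the isotopy $\gamma(t)=j^1f_t$ over a subinterval $[a,b]$. Since the $\alpha_0$-Hamiltonian of $(j^1f_t)$ is the autonomous-in-the-fiber function $(q,p,z)\mapsto \partial_t f_t(q)=\dot f_t(q)$, the SCH-length functional gives $\mathrm{Length}_{\ud^{\alpha_0}}(\gamma|_{[a,b]})=\int_a^b\max_q|\dot f_t(q)|\,\ud t$; here I use Remark~\ref{rem:longueur spectral et shelukhin} (and Remark~\ref{rem:longueur de shelukhin}), which identifies the length functional of $\dSCH^{\alpha_0}=\dspec^{\alpha_0}$ with $\LSCH^{\alpha_0}$ on smooth isotopies. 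On the other hand the endpoint distance is $\ud^{\alpha_0}(\gamma(a),\gamma(b))=\max_q|f_b(q)-f_a(q)|=\max_q\bigl|\int_a^b\dot f_t(q)\,\ud t\bigr|$. Thus $\gamma|_{[a,b]}$ is length-minimizing precisely when
\[
\int_a^b\max_q|\dot f_t(q)|\,\ud t=\max_q\Bigl|\int_a^b\dot f_t(q)\,\ud t\Bigr|,
\]
which is exactly condition~(1) of Lemma~\ref{lem:int} applied to $g_t:=\dot f_t$ on $[a,b]\times N$.

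Now I invoke Lemma~\ref{lem:int}: this integral equality holds for all $a<b$ if and only if there exist $\epsilon\in\{\pm1\}$ and a point $x_0\in N$ with $\epsilon\,\dot f_t(x_0)=\max_q|\dot f_t(q)|$ for all $t$. This is precisely the $\alpha_0$-quasi-autonomy of $(j^1f_t)$: the constant path $x_t\equiv x_0$ on the zero fiber lies in a single $\alpha_0$-Reeb orbit (the Reeb flow translates the $z$-coordinate, fixing $q$), it satisfies $j^1f_t\ni(x_0,\ud f_t(x_0),f_t(x_0))$, and $\epsilon H_t(x_t)=\epsilon\dot f_t(x_0)=\max|H_t|$ by Definition~\ref{def:quasiautonome}. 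Conversely quasi-autonomy furnishes exactly such a point $x_0$ after noting that the Reeb orbit constraint forces the witnessing point to have constant $q$-coordinate. The main obstacle to watch is the quantifier matching in this equivalence: Lemma~\ref{lem:int} as stated applies on a fixed compact interval, whereas being a \emph{minimizing} geodesic requires the length equality on \emph{every} subinterval $[a,b]\subset[0,1]$; I must check that a single global pair $(\epsilon,x_0)$ works simultaneously for all subintervals rather than a possibly $[a,b]$-dependent one, which follows because applying Lemma~\ref{lem:int} already on $[0,1]$ produces one $(\epsilon,x_0)$ valid at every $t$, and that same data then certifies condition~(2), hence~(1), on each subinterval.
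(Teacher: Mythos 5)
Your overall route is the same as the paper's: reduce to a single distance via the flatness computation in the $1$-jet model, identify the length of $(j^1f_t)$ over $[a,b]$ with $\int_a^b\max|\partial_t f_t|\,\ud t$ and the endpoint distance with $\max|f_b-f_a|$, and then apply Lemma~\ref{lem:int}. Your handling of the quantifier issue (one pair $(\epsilon,x_0)$ obtained on $[0,1]$ certifies minimality on every subinterval) is correct and matches the reduction the paper makes implicitly.

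There is, however, one genuine gap in the step where you translate condition (2) of Lemma~\ref{lem:int} into $\alpha_0$-quasi-autonomy. Definition~\ref{def:quasiautonome} requires a path $(x_t)$ with $x_t\in j^1f_t$ whose points all lie on a \emph{single} Reeb orbit of $\alpha_0$. Since the Reeb vector field of $\alpha_0=\ud z-p\cdot\ud q$ is $\partial_z$, a Reeb orbit is a line $\{(q,p)\}\times\R$: it has fixed $p$-coordinate as well as fixed $q$-coordinate. The candidate path forced on you by the condition $x_t\in j^1 f_t$ over the point $x_0$ is $x_t=(x_0,\ud f_t(x_0),f_t(x_0))$, which is not constant and lies on one Reeb orbit only if $t\mapsto\ud f_t(x_0)$ is constant. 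Your parenthetical justification (``the Reeb flow translates the $z$-coordinate, fixing $q$'') does not address this, and the phrase ``the constant path $x_t\equiv x_0$ lies in a single Reeb orbit'' conflates a point of $N$ with a point of $J^1N$. The missing argument --- which is exactly what the paper supplies --- is that $x_0$ realizes $\max_N(\epsilon\,\partial_t f_t)$ for every $t$ and $N$ is closed, so $x_0$ is a critical point of $\partial_t f_t$ for every $t$; since $\ud(\partial_t f_t)=\partial_t(\ud f_t)$, this gives $\partial_t(\ud f_t)(x_0)=0$, hence $t\mapsto\ud f_t(x_0)$ is constant and the path does lie on one Reeb orbit. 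With this line inserted your proof is complete; the converse direction, where you only need the constant $q$-coordinate extracted from the Reeb-orbit constraint, is fine as written.
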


\begin{proof}
    It is enough to prove the result for $\dSCH^{\alpha_0}$ thanks to Corollary
    \ref{cor:shel} and the first point in Remark \ref{rem:longueur spectral et
    shelukhin}. Moreover thanks to Remark \ref{rem:longueur de shelukhin} $(j^1
    f_t)$ is a geodesic if and only if $\dSCH^{\alpha_0}(j^1 f_0,j^1 f_1) =
    \int_0^1 \max| H_t |\ud t$ where $(H_t)$ is the $\alpha_0$-Hamiltonian map
    of $(j^1 f_t)$.  But $H_t = \partial_t f_t\circ \pi|_{j^1 f_t}$ where
    $\pi:J^1 N\to N$ is the bundle map and $\dSCH^{\alpha_0}(j^1 f_0, j^1 f_1) =
    \max| f_1 - f_0 |$ thanks to Remark \ref{rem:1 jet}, so it boils down to 
    \begin{equation*}\label{eq:intft}
        \int_0^1 \max|\partial_t f_t| \ud t = \max_q\left| \int_0^1 \partial_t f_t(q)\ud
        t\right|.
    \end{equation*}
    By Lemma~\ref{lem:int}, this is equivalent to
    the existence of $q_0\in N$ and $\epsilon\in\{\pm 1\}$ such that $\forall t\in [0,1]$,
    $\epsilon \partial_t f_t(q_0) = \max | \partial_t f_t |$.

    Let assume that $(j^1 f_t)$ is a geodesic.
    In particular, $q_0$ is a critical point of $\partial_t f_t$ where $t\in [0,1]$
    is fixed. As the time-derivative commutes with the differential operator on $N$,
    $\partial_t(\ud f_t)$ vanishes at $q_0$ so $t\mapsto \ud f_t(q_0)$ is
    constant. Therefore $(j^1 f_t)$ is quasi-autonomous,
    by taking $x_t = (q_0,\ud f_0(q_0),f_t(q_0))$.

    Conversely, if $(j^1 f_t)$ is quasi-autonomous, the associated
    path of $x_t\in j^1 f_t$ belonging to a same Reeb orbit
    such that $\epsilon H_t(x_t) = \max| H_t|$, for some $\epsilon\in\{\pm
    1\}$, is necessarily of the form
    $(q_0,\ud f_0(q_0),f_t(q_0))$, $t\in [0,1]$.
    As $H_t = \partial_t f_t\circ\pi|_{j^1 f_t}$, the conclusion follows from
    the beginning of the proof.
\end{proof}

\begin{proof}[Proof of Theorem \ref{thm:geo}]
 It is enough to prove the result for $\dspec^\alpha$ thanks to Corollary \ref{cor:shel}.
 Let $(\Lambda_t)_{t\in I}$ be an isotopy of $\Leg$ (resp. $\uLeg$) and let
 us fix $t_0\in I$.
 Let us consider an $\alpha$-Weinstein parametrization $\Phi:U\to\Leg$
 (resp. $U\to\uLeg$)
 centered at $\Lambda_{t_0}$ and $U'\subset U$ given by Theorem~\ref{thm}.
 Let $J\subset I$ be a neighborhood of $t_0$ such that $(\Lambda_t)_{t\in J}
 \subset \Phi(U')$ and denote $f_t := \Phi^{-1}(\Lambda_t)$ for $t\in J$.
 According to Theorem~\ref{thm},
 \[
     \dspec^\alpha(\Lambda_t,\Lambda_s) = \dspec^{\alpha_0}(j^1 f_t, j^1 f_s),
     \quad \forall t,s \in J,
 \]
 (see also Remark~\ref{rem:1 jet}).
 Therefore, $(\Lambda_t)_{t\in J}$ is a minimizing geodesic for $\dspec^\alpha$
 if and only if $(j^1 f_t)$ is a minimizing geodesic for $\dspec^{\alpha_0}$
 in $J^1\Lambda_{t_0}$.
 The conclusion now follows from Lemma~\ref{lem:geo} as the notion of
 quasi-autonomy is stable under strict contactomorphisms.
\end{proof}

\section{The case of contactomorphisms}
\label{sec:contacto}

Let us deduce from the Legendrian case the analogous statements for
contactomorphisms when a stronger orderability condition is assumed.

\subsection{$C^1$-local flatness}

Let us first recall that
for any \textit{closed} cooriented contact manifold $(M,\xi:=\ker\alpha)$, the
$1$-form $\beta:=\pi_2^*\alpha-e^\theta\pi_1^*\alpha$ is a contact form on
$M\times M\times \R$ where $\pi_i : M\times M\times \R\to M$,
$(x_1,x_2,\theta)\mapsto x_i$. Its cooriented kernel $\Xi:=\ker\beta$ 
does not depend (up to isomorphism) on
the choice of the contact form $\alpha$ supporting $\xi$.
Note that the diagonal $\Delta:=\{(x,x,0)\ |\ x\in M\}$
is a closed Legendrian of $(M\times M\times\R,\Xi)$. For any contactomorphism
$\phi$ of $(M,\ker\alpha)$ isotopic to the identity, the graph of $\phi$,
$gr_\alpha(\phi):=\{(x,\phi(x),g(x))\ |\ x\in M\}$ where
$\phi^*\alpha=e^g\alpha$, lies in $\Leg(\Delta)$. Moreover the map
$\Gcont\to\Leg(\Delta)$ (resp. $\uGcont\to\uLeg(\Delta)$)
we have just described is a
local homeomorphism at the identity.

In particular this allows again to construct a continuous embedding
$U\to\Gcont$ (resp. $U\to\uGcont$) sending the zero map
to the identity where $U$ denotes a 
$C^2$-neighborhood of the zero map in $C^\infty(M,\R)$.
Such a map is called an $\alpha$-Weinstein parametrization centered
at the identity.

\begin{cor}\label{cor:ugcontlocflat}
    If $(M,\ker\alpha)$ is closed and $\uLeg(\Delta)$ is orderable then endowed with
    the spectral pseudo-norm $\uGcont$ is $C^1$-locally flat.
    More precisely, for every $\alpha$-Weinstein parametrization
    $\Phi:U\to\uGcont$ centered at the identity, there exists $U'\subset U$
    a $C^2$-neighborhood of the zero map
    such that for all $f\in U'$
    \[
        c_+^\alpha(\Phi(f))=\max f\quad \text{and}\quad c_-^\alpha(\Phi(f))=\min f\]
        in particular $\Nspec{\Phi(f)}^\alpha=\max |f|$. 
\end{cor}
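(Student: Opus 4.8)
The plan is to reduce the contactomorphism statement (Corollary~\ref{cor:ugcontlocflat}) to the already-established Legendrian statement (Theorem~\ref{thm}) via the graph construction, exactly as the geodesic results were reduced. First I would invoke the local homeomorphism $\Gcont\to\Leg(\Delta)$ (resp. $\uGcont\to\uLeg(\Delta)$) near the identity described at the start of Section~\ref{sec:contacto}, which sends $\phi\mapsto gr_\alpha(\phi)$ and sends $\id$ to $\Delta$. Since $\uLeg(\Delta)$ is assumed orderable, Theorem~\ref{thm} applies to the Legendrian isotopy class of $\Delta$ in the contact product $(M\times M\times\R,\Xi=\ker\beta)$: there is a $\beta$-Weinstein parametrization $\widetilde\Phi:\widetilde U\to\uLeg(\Delta)$ centered at $\Delta$ and a subneighborhood $\widetilde U'$ on which the Legendrian selectors $\ell_\pm^\beta$ compute $\max$ and $\min$ of differences of the defining functions.

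The key step is to compatibly relate the two parametrizations and the two families of selectors. I would argue that an $\alpha$-Weinstein parametrization $\Phi:U\to\uGcont$ centered at the identity corresponds, under the graph map, to a $\beta$-Weinstein parametrization of $\uLeg(\Delta)$ centered at $\Delta$, so that after shrinking we may identify $\Phi(f)$ with $\widetilde\Phi(f)$ for $f$ in a common $C^2$-neighborhood. Taking $U':=U\cap\widetilde U'$ (more precisely the preimage under the graph identification of $\widetilde U'$), Theorem~\ref{thm} then yields $\ell_+^\beta(gr_\alpha(\Phi(f)),\Delta)=\max f$ and $\ell_-^\beta(gr_\alpha(\Phi(f)),\Delta)=\min f$. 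It then remains to match $c_\pm^\alpha(\Phi(f))$ with $\ell_\pm^\beta(gr_\alpha(\Phi(f)),\Delta)$. This should follow from the definitions: the Reeb flow $\phi_t^\alpha$ acting on $\phi\in\uGcont$ corresponds under the graph embedding to the $\beta$-Reeb flow acting on $gr_\alpha(\phi)$, so the orderings $\phi\cleq\phi_t^\alpha$ in $\uGcont$ translate into $gr_\alpha(\phi)\cleq\phi_t^\beta\cdot\Delta$ in $\uLeg(\Delta)$, whence the infima/suprema defining $c_\pm^\alpha$ and $\ell_\pm^\beta$ coincide. Concluding, $\Nspec{\Phi(f)}^\alpha=\max\{c_+^\alpha(\Phi(f)),-c_-^\alpha(\Phi(f))\}=\max\{\max f,-\min f\}=\max|f|$.

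The main obstacle I anticipate is verifying that the $\beta$-Reeb dynamics on the contact product genuinely intertwines the Reeb flow $\phi_t^\alpha$ on $M$ with the natural action on graphs, so that the translation selectors $c_\pm^\alpha$ agree with the Legendrian selectors $\ell_\pm^\beta$ on graphs. Concretely, one must check that $\phi_t^\beta\cdot\Delta = gr_\alpha(\phi_t^\alpha)$ (up to the correct identification of the $\theta$-coordinate and the conformal factor $e^g$), so that the two chains of contact inequalities coincide term by term. A secondary technical point is ensuring the graph map is not merely a local homeomorphism but respects the $C^1$-topology and the Weinstein parametrizations compatibly, so that shrinking $U$ appropriately is legitimate; this is the analogue of the bookkeeping already carried out in Section~\ref{sec:proofMain} for $\Phi$ and should present no new difficulty. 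Once the intertwining of selectors is in place, the corollary follows formally from Theorem~\ref{thm}, and the $C^1$-local flatness of $\uGcont$ under $\Nspec{\cdot}^\alpha$ is immediate since the parametrization is by construction an isometry onto an open subset of $(C^\infty(M,\R),\max|\cdot|)$.
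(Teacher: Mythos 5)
Your reduction to Theorem~\ref{thm} via the graph construction in the contact product is the right starting point and is exactly how the paper begins: one sets $\mathcal{C}_\pm^\alpha(\phi):=\ell_\pm^\beta(gr_\alpha(\phi),\Delta)$ and Theorem~\ref{thm} gives $\mathcal{C}_+^\alpha(\Phi(f))=\max f$, $\mathcal{C}_-^\alpha(\Phi(f))=\min f$. The gap is in your final step, where you claim that ``the infima/suprema defining $c_\pm^\alpha$ and $\ell_\pm^\beta$ coincide'' because the relation $\phi\cleq\phi_t^\alpha$ in $\uGcont$ translates into $gr_\alpha(\phi)\cleq\phi_t^\beta\cdot\Delta$ in $\uLeg(\Delta)$. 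That translation only goes one way: a non-negative contact isotopy of $M$ from $\phi$ to $\phi_t^\alpha$ yields a non-negative Legendrian isotopy of graphs, but a non-negative Legendrian isotopy in $\uLeg(\Delta)$ joining $gr_\alpha(\phi)$ to $\phi_t^\beta\cdot\Delta$ has no reason to stay graphical, hence does not witness $\phi\cleq\phi_t^\alpha$ in $\uGcont$. So the set over which the infimum defining $c_+^\alpha(\phi)$ is taken is only \emph{contained in} the set defining $\ell_+^\beta(gr_\alpha(\phi),\Delta)$, giving $c_+^\alpha(\phi)\geq\mathcal{C}_+^\alpha(\phi)$ (and dually $c_-^\alpha(\phi)\leq\mathcal{C}_-^\alpha(\phi)$) but not equality. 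Whether the two selectors coincide is precisely the kind of question the paper leaves open (compare the discussion of $\mathcal{E}_\pm$ and the spectrality conjecture for $c_\pm^\alpha$), so you cannot take it ``from the definitions.''

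The repair is short and is what the paper does: the one-sided comparison above already gives $c_+^\alpha(\Phi(f))\geq\max f$ and $c_-^\alpha(\Phi(f))\leq\min f$, and the reverse inequalities come from the Hamiltonian estimate (\ref{eq:GselectorsHam}) applied to the straight isotopy $t\mapsto\Phi(tf)$, whose $\alpha$-Hamiltonian has pointwise maximum $\max f$ and minimum $\min f$ (the $\beta$-Hamiltonian of the isotopy of graphs equals the $\alpha$-Hamiltonian of the contact isotopy at corresponding points). Your secondary checks --- that $\phi_t^\beta\cdot\Delta=gr_\alpha(\phi_t^\alpha)$ and that the graph map intertwines the Weinstein parametrizations --- are correct and needed, but they do not close the order-theoretic gap above.
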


\begin{proof}
    Let us define the map
    $\mathcal{C}_\pm^\alpha(\phi):=\ell_\pm(gr_\alpha(\phi),\Delta)$ where by a
    slight abuse of notation $\Delta\in\uLeg(\Delta)$ denotes the class of the
    constant path and $gr(\phi)\in\uLeg(\Delta)$ denotes the class of the path
    $(gr_\alpha(\phi_t))$ for a path $(\phi_t)\subset \Gcont$ representing
    $\phi\in\uGcont$. By Theorem \ref{thm} $\mathcal{C}_+^\alpha(\Phi(f))=\max
    f$, $\mathcal{C}_-^\alpha(\Phi(f))=\min f$.
    By maximality of
    $c_\pm^\alpha$ (see the discussion at the end of \cite[Section
    1.3]{allais2023spectral}) we deduce that $c_+^\alpha(\Phi(f)) \geq \max f$
    and $c_-^\alpha(\Phi(f)) \leq \min f$.
    The reverse inequalities come from (\ref{eq:GselectorsHam}).
\end{proof}

A proof similar to that of Corollary~\ref{cor:shel} then brings the corresponding
statement for $\uGcont$.

\begin{cor}\label{cor:shelukhin contact}
   Suppose that $(M,\ker\alpha)$ is closed and $\uLeg(\Delta)$ is orderable.
   Then for every $\phi\in\uGcont$ that is $C^1$-close to the identity, 
   \[\NSH{\phi}^\alpha=\Nspec{\phi}^\alpha.\]
   Therefore endowed with the Shelukhin-Hofer pseudo norm $\uGcont$ is $C^1$-locally flat. 
\end{cor}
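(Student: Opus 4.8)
The plan is to replicate the structure of the proof of Corollary~\ref{cor:shel}, substituting its two ingredients by their contactomorphism analogues. On the one hand, the inequality $\Nspec{\cdot}^\alpha \leq \NSH{\cdot}^\alpha$ recalled at the end of Section~\ref{sec:spec} gives one direction of the claimed identity for free. On the other hand, Corollary~\ref{cor:ugcontlocflat} supplies $\Nspec{\Phi(f)}^\alpha = \max|f|$ for every $f \in U'$, where $\Phi : U \to \uGcont$ is an $\alpha$-Weinstein parametrization centered at the identity and $U' \subset U$ is the convex $C^2$-neighborhood produced there. Writing $\phi = \Phi(f)$ with $f \in U'$, it then remains only to exhibit an isotopy from $\id$ to $\phi$ whose Shelukhin--Hofer length is at most $\max|f|$: this will force $\NSH{\phi}^\alpha \leq \max|f| = \Nspec{\phi}^\alpha \leq \NSH{\phi}^\alpha$ and hence equality throughout.

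First I would record the compatibility of the two length functionals under the graph embedding $\Gcont \to \Leg(\Delta)$. If $(\phi_t)$ is a contact isotopy of $(M,\ker\alpha)$ with $\alpha$-Hamiltonian $(h_t)$ and $\phi_t^*\alpha = e^{g_t}\alpha$, then parametrizing the graph by $j_t(x) = (x,\phi_t(x),g_t(x))$ and using $\beta = \pi_2^*\alpha - e^\theta\pi_1^*\alpha$ one computes $\beta(\partial_t j_t) = \alpha_{\phi_t(x)}(\partial_t\phi_t(x)) = h_t(\phi_t(x))$, so that the $\beta$-Hamiltonian $K_t$ of the Legendrian isotopy $(gr_\alpha(\phi_t))$ satisfies $K_t \circ j_t = h_t \circ \phi_t$. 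Since $\phi_t$ is a diffeomorphism this yields $\max|K_t| = \max|h_t|$ for every $t$, whence $\LSH^\alpha(\phi_t) = \LSCH^\beta(gr_\alpha(\phi_t))$.

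Next I would take the straight path $t \mapsto \Phi(tf)$, $t \in [0,1]$, which joins $\Phi(0) = \id$ to $\phi$ and stays in the domain of $\Phi$ by convexity of $U'$. Its graph is carried by the strict contactomorphism $\Psi$ onto the path of $1$-jets $(j^1(tf))$ in the Weinstein model $J^1\Delta$, whose $\beta_0$-Hamiltonian is the autonomous function $(q,p,z) \mapsto \partial_t(tf)(q) = f(q)$, exactly as in the proof of Corollary~\ref{cor:shel} applied to the contact form $\beta$. Hence $\LSCH^{\beta_0}(j^1(tf)) = \max|f|$, and invariance of the SCH-length under $\Psi$ gives $\LSCH^\beta(gr_\alpha(\Phi(tf))) = \max|f|$. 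Combining this with the identity of the previous paragraph yields $\LSH^\alpha(\Phi(tf)) = \max|f|$, the required bound, and therefore $\NSH{\phi}^\alpha = \Nspec{\phi}^\alpha$. The concluding assertion of $C^1$-local flatness then follows from Corollary~\ref{cor:ugcontlocflat}: the two right-invariant pseudo-distances coincide on a $C^1$-neighborhood of the identity, and hence, by right-invariance, near every point.

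I expect the only nonroutine point to be the length compatibility $\LSH^\alpha(\phi_t) = \LSCH^\beta(gr_\alpha(\phi_t))$, that is, the clean identification $K_t \circ j_t = h_t \circ \phi_t$ of the contact Hamiltonian on $M$ with the Legendrian Hamiltonian of the graph. Once this is established, everything else reduces to the computation already carried out in Corollary~\ref{cor:shel}, now performed for the contact form $\beta$ on $M \times M \times \R$.
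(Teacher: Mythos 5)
Your proposal is correct and is essentially the proof the paper intends: the paper only remarks that ``a proof similar to that of Corollary~\ref{cor:shel} then brings the corresponding statement,'' and your argument is exactly that adaptation, combining $\Nspec{\cdot}^\alpha\leq\NSH{\cdot}^\alpha$ with Corollary~\ref{cor:ugcontlocflat} and the straight path $t\mapsto\Phi(tf)$. The one ingredient you add explicitly, the identity $K_t\circ j_t=h_t\circ\phi_t$ giving $\LSH^\alpha(\phi_t)=\LSCH^\beta(gr_\alpha(\phi_t))$, is computed correctly (the $\pi_1$-component of $\partial_t j_t$ vanishes, so $\beta(\partial_t j_t)=\alpha(\partial_t\phi_t)$) and is precisely the point the paper leaves implicit.
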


    By mimicking the proof of Theorem \ref{thm} one can see that Corollary \ref{cor:ugcontlocflat} and Corollary \ref{cor:shelukhin contact}
    actually hold whenever there exist maps
$c_-\leq c_+$ that are spectral, compatible with the partial order, non-degenerate and normalized in the
sense of \cite{allais2023spectral}. In particular it holds for $\uGcont$ of
lens spaces \cite{allais2024spectral}. When $(M,\ker\alpha)$ is a closed contact manifold such that $\Gcont$ (resp.
$\uGcont$) is orderable, it is conjectured in \cite{allais2023spectral} that
$c_\pm^\alpha$ on $\Gcont$ (resp. $\uGcont$) are spectral. It is interesting to
note that for elements that can be joined to the identity by a minimizing
geodesic this conjecture holds. More precisely denote by 
\[
\mathcal{E}_-:=\left\{\phi\in \Gcont\ \text{(resp. }\in \uGcont\text{)}\ |\
\phi_{c_-^\alpha(\phi)}^\alpha\cleq\phi\right\} \quad \quad
\mathcal{E}_+:=\left\{\phi\in\Gcont\ \text{(resp. }\in \uGcont\text{)}\ |\
\phi\cleq\phi_{c_+^\alpha(\phi)}^\alpha\right\}.
\]

\begin{prop}\label{prop:spec}
    Let $\phi\in\mathcal{E}_\pm$ then
    $c_\pm^\alpha(\phi)\in\spec^\alpha(\phi)$.
\end{prop}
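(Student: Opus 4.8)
The plan is to transfer everything to the Legendrian setting via the graph construction of Section~\ref{sec:contacto} and then feed in the spectrality of the Legendrian selectors. Recall from the proof of Corollary~\ref{cor:ugcontlocflat} the auxiliary quantities $\mathcal{C}_\pm^\alpha(\phi):=\ell_\pm^\beta(gr_\alpha(\phi),\Delta)$ attached to the contact form $\beta=\pi_2^*\alpha-e^\theta\pi_1^*\alpha$ on $M\times M\times\R$. The graph map $\phi\mapsto gr_\alpha(\phi)$ is order-preserving (a non-negative contact isotopy $(\varphi_s)$ of contact Hamiltonian $H_s$ has non-negative graph, since the $\beta$-Hamiltonian of $(gr_\alpha(\varphi_s))$ at the point $(x,\varphi_s(x),g_s(x))$ equals $H_s(\varphi_s(x))$) and it sends the $\alpha$-Reeb flow to the $\beta$-Reeb flow. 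This already yields $\mathcal{C}_+^\alpha(\phi)\le c_+^\alpha(\phi)$ and $\mathcal{C}_-^\alpha(\phi)\ge c_-^\alpha(\phi)$ for every $\phi$, matching the maximality inequality used in Corollary~\ref{cor:ugcontlocflat}. The whole game is to upgrade these to equalities exactly on $\mathcal{E}_\pm$; once this is done, $c_\pm^\alpha(\phi)=\mathcal{C}_\pm^\alpha(\phi)\in\spec^\beta(gr_\alpha(\phi),\Delta)=\spec^\alpha(\phi)$ by the spectrality property of Theorem~\ref{thm:LegSS}.

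Before that I would record the spectrum identity $\spec^\beta(gr_\alpha(\phi),\Delta)=\spec^\alpha(\phi)$. A one-line computation gives $R_\beta=R_\alpha^{(2)}$, the Reeb field of the second factor (indeed $\beta(R_\alpha^{(2)})=1$ and $\iota_{R_\alpha^{(2)}}\ud\beta=0$), so $\phi_t^\beta(x_1,x_2,\theta)=(x_1,\phi_t^\alpha(x_2),\theta)$ and in particular $\phi_t^\beta(\Delta)=gr_\alpha(\phi_t^\alpha)$, the Reeb flow being strict. A $\beta$-Reeb chord of length $t$ from $\Delta$ to $gr_\alpha(\phi)$ is a point $(x,x,0)$ with $\phi_t^\beta(x,x,0)=(x,\phi_t^\alpha(x),0)\in gr_\alpha(\phi)$, i.e. $\phi(x)=\phi_t^\alpha(x)$ together with the vanishing of the conformal factor $(\phi^*\alpha)_x=\alpha_x$. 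This is precisely a translated point of $\phi$ with translation $t$, proving the identity and, incidentally, that $\mathcal{C}_\pm^\alpha(\phi)$ is always a translation.

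The main obstacle is the reverse inequality on $\mathcal{E}_\pm$, say $c_+^\alpha(\phi)\le\mathcal{C}_+^\alpha(\phi)$ for $\phi\in\mathcal{E}_+$. It cannot hold for all $\phi$ --- otherwise $c_\pm^\alpha$ would be unconditionally spectral, which is only conjectural --- so the defining condition of $\mathcal{E}_+$, namely that the infimum $t_+:=c_+^\alpha(\phi)$ be attained ($\phi\cleq\phi_{t_+}^\alpha$), must enter decisively. The attained relation only gives back $gr_\alpha(\phi)\cleq\phi_{t_+}^\beta(\Delta)$ and hence $\mathcal{C}_+^\alpha(\phi)\le t_+$; the real content is to rule out $\mathcal{C}_+^\alpha(\phi)<t_+$. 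I expect the clean route to pass through the geodesic theory: the attained relation expresses that $\phi$ is reached from the identity by a minimizing geodesic realizing $c_+^\alpha(\phi)$, which by the contactomorphism analogue of Theorem~\ref{thm:geo} is $\alpha$-quasi-autonomous. Quasi-autonomy furnishes a continuous family $x_s$ on a single $\alpha$-Reeb orbit along which the generating Hamiltonian realizes its maximum for all $s$; propagating this orbit to the time-one map exhibits a translated point of $\phi$ whose translation equals $\int_0^1\max H_s\,\ud s=t_+$. By the spectrum identity this forces $t_+\in\spec^\alpha(\phi)$, whence $t_+=\mathcal{C}_+^\alpha(\phi)$.

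The case $\phi\in\mathcal{E}_-$ is symmetric, replacing $\ell_+^\beta$ by $\ell_-^\beta$ and taking $\epsilon=-1$ in the quasi-autonomy, or simply invoking Poincaré duality. The only genuinely non-formal input is thus the existence and quasi-autonomy of the minimizing geodesic attached to an element of $\mathcal{E}_\pm$; the remainder is bookkeeping with the axioms of Theorem~\ref{thm:LegSS} and the spectrum identity established above.
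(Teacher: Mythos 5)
Your reduction to the Legendrian picture is sound as far as it goes: the inequalities $\mathcal{C}_+^\alpha\leq c_+^\alpha$, $\mathcal{C}_-^\alpha\geq c_-^\alpha$, the computation $R_\beta=R_\alpha^{(2)}$, and the identification $\spec^\beta(gr_\alpha(\phi),\Delta)=\spec^\alpha(\phi)$ are all correct. But the step you yourself flag as ``the only genuinely non-formal input'' is a genuine gap, and it is the whole proposition. The hypothesis $\phi\in\mathcal{E}_+$ only says that the infimum defining $c_+^\alpha(\phi)$ is attained as an order relation, i.e.\ there exists \emph{some} non-negative isotopy from $\phi$ to $\phi^\alpha_{t_+}$. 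It does not produce a minimizing geodesic from $\id$ to $\phi$, and no result in the paper (nor any manipulation of that non-negative isotopy that I can see) converts the one into the other; the paper's own discussion goes in the opposite direction, namely that endpoints of minimizing geodesics lie in $\mathcal{E}_\pm$, and leaves open in Question 4.9(1) whether every element is so reachable. Even granting such a geodesic, Corollary \ref{cor:geo} would give quasi-autonomy with length $\int_0^1\max|H_s|\,\ud s=\Nspec{\phi}^\alpha=\max\{c_+^\alpha(\phi),-c_-^\alpha(\phi)\}$ and an uncontrolled sign $\epsilon$, so the translated point you extract need not have translation $c_+^\alpha(\phi)$. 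So the argument fails at its core step.

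For comparison, the paper's proof does not pass through $\mathcal{C}_\pm^\alpha$ or the geodesic theory at all. It argues by contradiction via Lemma \ref{lem:spectralite}: set $t=c_-^\alpha(\phi)$; the defining relation $\phi^\alpha_t\cleq\phi$ gives $\id\cleq\phi^\alpha_{-t}\phi$, and if $t\notin\spec^\alpha(\phi)$ then $\phi^\alpha_{-t}\phi$ has no discriminant point, so a small perturbation in the contact product (moving $gr(\phi^\alpha_{-t}\phi)$ off $\Delta$ by a contactomorphism fixing $\Delta$, keeping the non-negative path of graphs graphical) upgrades $\cleq$ to $\cll$, i.e.\ $\phi^\alpha_{t+\varepsilon}\cleq\phi$ for some $\varepsilon>0$, contradicting the maximality of $t$; the $c_+^\alpha$ case follows by Poincar\'e duality. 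If you want to salvage your outline, you would need to replace the appeal to geodesics by an argument of this perturbative type showing directly that $c_\pm^\alpha(\phi)\notin\spec^\alpha(\phi)$ is incompatible with the attained order relation.
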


It seems
however unlikely that $\mathcal{E}_+=\mathcal{E}_-=\Gcont$ -- which is
equivalent to saying that $\{\phi\cgeq\id\}\subset\Gcont$ is closed for the
$C^1$-topology. See Section \ref{ssec:geocontact} for discussions. 

Given a pair of points $x,y$ in $\Gcont$ or $\uGcont$, let us write $x\cll y$ if there
exists a positive isotopy joining $x$ to $y$.
The statement of Proposition \ref{prop:spec} follows directly from the following lemma.

\begin{lem}\label{lem:spectralite}
    Let $\phi\in\Gcont$ (resp. $\uGcont$) such that $\id\cleq\phi$. If $\phi$ (resp. $\Pi(\phi)$) does not have any discriminant point then $\id\cll\phi$.
\end{lem}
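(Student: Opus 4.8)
The plan is to deduce the statement from a general \emph{absorption principle} for the order relations, namely: if $a\cll b$ and $b\cleq c$ in $\Gcont$ (resp. $\uGcont$), then $a\cll c$. Granting this, it suffices to produce a single $\tau>0$ with $\phi_\tau^\alpha\cleq\phi$: since the Reeb flow $u\mapsto\phi_{\tau u}^\alpha$ is a positive isotopy we then have $\id\cll\phi_\tau^\alpha\cleq\phi$, whence $\id\cll\phi$. I would prove the absorption principle by an explicit reparametrised Reeb correction. Write $a\cll b$ by a positive isotopy $(a_u)_{u\in[0,1]}$ whose $\alpha$-Hamiltonian $P_u$ satisfies $P_u\ge c_0>0$ (such a $c_0$ exists since $M$ is closed), and $b\cleq c$ by a non-negative isotopy $(b_u)_{u\in[0,1]}$ from $b$ to $c$ with Hamiltonian $Q_u\ge0$. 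Fix $\lambda\in(-c_0,0)$ and set $\Theta_u:=\phi_{\lambda u}^\alpha\circ a_u$ for $u\in[0,1]$ and $\Theta_u:=\phi_{\lambda(2-u)}^\alpha\circ b_{u-1}$ for $u\in[1,2]$. A direct computation of the Hamiltonian of a Reeb-twisted isotopy shows that the generator of $\Theta$ is $\ge\lambda+c_0>0$ on $[0,1]$ and $\ge-\lambda>0$ on $[1,2]$; the two pieces agree at $u=1$ (both equal $\phi_\lambda^\alpha(b)$), while $\Theta_0=a$ and $\Theta_2=c$, so after smoothing the corner $\Theta$ is a positive isotopy from $a$ to $c$.

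To find $\tau>0$ with $\phi_\tau^\alpha\cleq\phi$ I would pass to the contact product $(M\times M\times\R,\ker\beta)$ and the graph $gr_\alpha(\phi)\in\Leg(\Delta)$. The hypotheses translate cleanly: $\id\cleq\phi$ graphs to $\Delta\cleq gr_\alpha(\phi)$; the absence of discriminant points is exactly $gr_\alpha(\phi)\cap\Delta=\emptyset$; one has $\spec^\beta(gr_\alpha(\phi),\Delta)=\spec^\alpha(\phi)$; and the $\beta$-Reeb flow acts on graphs by $\phi_t^\beta(gr_\alpha(\psi))=gr_\alpha(\phi_t^\alpha\circ\psi)$. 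Applying the axioms of Theorem~\ref{thm:LegSS} to the Legendrian selector $\ell_-^\beta(gr_\alpha(\phi),\Delta)$, monotonicity and normalization give $\ell_-^\beta(gr_\alpha(\phi),\Delta)\ge0$, spectrality gives $\ell_-^\beta(gr_\alpha(\phi),\Delta)\in\spec^\alpha(\phi)$, and $0\notin\spec^\alpha(\phi)$ (a length-zero chord is an intersection point) forces $\ell_-^\beta(gr_\alpha(\phi),\Delta)>0$. By definition of $\ell_-^\beta$ this provides $\tau>0$ with $gr_\alpha(\phi_\tau^\alpha)=\phi_\tau^\beta\Delta\cleq gr_\alpha(\phi)$ in $\Leg(\Delta)$.

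The hard part will be upgrading this Legendrian relation to the contactomorphism relation $\phi_\tau^\alpha\cleq\phi$ required in the first step: a non-negative Legendrian isotopy between two graphs need not consist of graphs, and the unconditional coincidence of the two orders would be equivalent to the spectrality of $c_\pm^\alpha$, which is only conjectural. This is precisely where the disjointness $gr_\alpha(\phi)\cap\Delta=\emptyset$ must be used essentially. I would exploit that $\psi\mapsto gr_\alpha(\psi)$ is a homeomorphism from a neighborhood of $\id$ onto a $C^1$-neighborhood of $\Delta$, so that graphicality is an open condition, and argue that the strict Reeb slack $\tau>0$ allows one to keep the realising isotopy inside the graph locus, the no-discriminant hypothesis guaranteeing that it never has to cross the diagonal. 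Establishing this graphification is the main technical obstacle; once it is in place the absorption principle yields $\id\cll\phi$, and the entire argument transfers to $\uGcont$ by lifting the isotopies through the covering $\Pi$ and reasoning with $\Pi(\phi)$.
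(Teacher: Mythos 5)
Your proposal has a genuine gap at the step you yourself flag as ``the main technical obstacle,'' and the proposed remedy does not close it. After producing $\tau>0$ with $\phi_\tau^\beta\Delta\cleq gr_\alpha(\phi)$ in $\Leg(\Delta)$ (this part is fine: the identification $\spec^\beta(gr_\alpha(\phi),\Delta)=\spec^\alpha(\phi)$, the use of monotonicity, normalization and spectrality of $\ell_-^\beta$, and the absorption principle $a\cll b\cleq c\Rightarrow a\cll c$ are all correct), you must upgrade this to $\phi_\tau^\alpha\cleq\phi$ in $\Gcont$. But the non-negative Legendrian isotopy whose existence is guaranteed by $\ell_-^\beta(gr_\alpha(\phi),\Delta)>0$ is completely arbitrary: it joins two graphs but has no reason to stay $C^1$-close to the graph locus, so the openness of graphicality gives you nothing, and the ``Reeb slack'' $\tau>0$ does not constrain where the isotopy wanders. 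Making this work in general is precisely the (open) question of whether the order on $\Gcont$ coincides with the order induced from $\Leg(\Delta)$, equivalently the conjectural spectrality of $c_\pm^\alpha$ discussed right before the lemma; the disjointness $gr_\alpha(\phi)\cap\Delta=\emptyset$ plays no visible role in your graphification step, since the problematic isotopy runs between $gr_\alpha(\phi_\tau^\alpha)$ and $gr_\alpha(\phi)$, not from $\Delta$.

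The paper's proof sidesteps this entirely by never discarding the contact isotopy given by the hypothesis. It takes the non-negative path $(\phi_t)$ from $\id$ to $\phi$, whose path of graphs $(gr(\phi_t))$ is non-negative \emph{and already graphical}, and composes it with a $C^1$-small compactly supported contactomorphism $\psi_\varepsilon$ of the contact product that fixes $\Delta$ and moves $gr(\phi_1)$ to $gr(\phi_{-\varepsilon}^\alpha\circ\phi_1)$ (such a $\psi_\varepsilon$ exists because, by the no-discriminant-point hypothesis, the sliding family $gr(\phi_{-t\varepsilon}^\alpha\circ\phi_1)$ stays disjoint from $\Delta$). Non-negativity is preserved under an ambient contactomorphism, and graphicality of the \emph{perturbed} path is preserved because $\psi_\varepsilon$ is $C^1$-small and the original path is graphical and compact in time. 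This yields $\id\cleq\phi_{-\varepsilon}^\alpha\circ\phi_1\cll\phi_1$ directly in $\Gcont$, and then the same absorption you use concludes. In short: you use the hypothesis $\id\cleq\phi$ only through its Legendrian shadow $\Delta\cleq gr_\alpha(\phi)$, which loses exactly the graphicality information the argument needs; the disjointness from $\Delta$ must instead be used to perturb the given graphical isotopy, not to graphify an unknown one.
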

Note that if $\Gcont$ (resp. $\uGcont$) is not orderable the proposition is trivial. 

\begin{proof}[Proof of Lemma \ref{lem:spectralite}] Consider
    $(\phi_t)\subset\Gcont$ a non-negative path starting at the identity such that $\phi_1=\phi\in\Gcont$ (resp. $[(\phi_t)]=\phi$).
    Since $\phi_1$ does not have discriminant point there exists
    $\varepsilon_0>0$ such that the path of closed Legendrian submanifolds
    $(\text{gr}(\phi_{-t\varepsilon}^\alpha\circ\phi_1))_{t\in[0,1]}$ does not
    intersect $\Delta$ for any $\varepsilon\in(0,\varepsilon_0)$. Therefore
    there exists a compactly supported contactomorphism $(\psi_{\varepsilon})$
    of $M\times M\times \mathbb{R}$ isotopic to the identity that sends
    $(\text{gr}(\phi_1))$ on $\text{gr}(\phi_{-\varepsilon}^\alpha\circ\phi_1)$ and
    that fixes $\Delta$. Moreover, for $\varepsilon$ sufficiently small, one
    can construct $\psi_\varepsilon$ sufficiently $C^1$-small such that the
    non-negative path of Legendrians $(\psi_{\varepsilon}(\text{gr}(\phi_t))$
    starting at $\Delta$ is graphical, \emph{i.e.} there exists an isotopy
    $(\varphi_t)\subset\Gcont$ starting at the identity such that
    $\psi_\varepsilon(\text{gr}(\phi_t))=\text{gr}(\varphi_t)$. This implies
    that $\id\cleq\varphi_1=\phi_{-\varepsilon}^\alpha\circ\phi_1\cll \phi_1$ and
    concludes the proof. 
\end{proof}

\begin{proof}[Proof of Proposition \ref{prop:spec}]
     Let $t:=c_-^\alpha(\phi)$. Since $\phi_t^\alpha\cleq\phi$ it implies that $\id\cleq\phi_{-t}^\alpha\phi$. Suppose by contradiction that $t$ is not in the spectrum. Therefore $\id\cll \phi_{-t}^\alpha\phi$ by Lemma \ref{lem:spectralite} which contradicts the definition of $t$. To deduce the result for $c_+^\alpha$ one can use Poincaré duality.
\end{proof}

\subsection{Geodesics}\label{ssec:geocontact}

Identifying contactomorphisms with their graphs in the contact product
of $M$, the definition of quasi-autonomous contact isotopies is straightforward.

\begin{definition}
A contact isotopy $(\phi_t)\subset\Gcont$ is $\alpha$-quasi-autonomous if the
corresponding Legendrian isotopy $(gr_\alpha(\phi_t))\subset (M\times M\times
\R,\ker\beta)$ is $\beta$-quasi-autonomous. 
\end{definition}

Note that a contact isotopy $(\phi_t)$ starting at the identity is
$\alpha$-quasi-autonomous if and only if there exist a point $x\in M$ and $\epsilon\in\{\pm 1\}$ such that
$x$ is an $\alpha$-translated point of $\phi_t$ 
(\emph{cf.} (\ref{eq:Gspectrum}) and above)
and $\epsilon H_t(\phi_t(x))=\max
|H_t|$ for all $t\in[0,1]$ where $H$ denotes the $\alpha$-Hamiltonian function
of $(\phi_t)$. 

The characterization of geodesics in this context is now a direct consequence
of Theorem~\ref{thm:geo} and the $C^1$-local isometry between $\Gcont$ or $\uGcont$
and $\uLeg(\Delta)$.
Let us recall that a geodesic for the group pseudo-norm $|\cdot|$ is by definition
a geodesic for the right-invariant pseudo-distance
$(g,h)\mapsto |g h^{-1}|$ (in fact it will also be a geodesic for the
associated left-invariant pseudo-distance in our case).

\begin{cor}\label{cor:geo}
Let $(M,\ker\alpha)$ be a closed contact manifold such that $\uLeg(\Delta)$ is
orderable. A contact isotopy $(\phi_t)$ is a geodesic for $|\cdot|^\alpha$ if
and only if it is $\alpha$-quasi-autonomous, where $|\cdot|^\alpha$ denotes
either $\Nspec{\cdot}^\alpha$ or $\NSH{\cdot}^\alpha$.
\end{cor}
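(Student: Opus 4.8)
The plan is to reduce Corollary~\ref{cor:geo} to the already-established Legendrian statement, Theorem~\ref{thm:geo}, via the graph embedding $\Gcont\to\Leg(\Delta)$ (resp.\ $\uGcont\to\uLeg(\Delta)$) into the contact product $(M\times M\times\R,\ker\beta)$. First I would recall that this map is a local homeomorphism at the identity and, more to the point, that by construction it sends the group pseudo-norm to the Legendrian pseudo-distance to the diagonal: concretely, for $\phi,\psi\in\uGcont$ one has $\dspec^\beta(gr_\alpha(\phi),gr_\alpha(\psi))=\Nspec{\phi\psi^{-1}}^\alpha$, and similarly for $\dSCH^\beta$ versus $\NSH{\cdot}^\alpha$. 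This is exactly the right-invariance built into the definition of a geodesic for a group pseudo-norm, so that a curve $(\phi_t)$ is a geodesic for $|\cdot|^\alpha$ if and only if the curve of graphs $(gr_\alpha(\phi_t))$ is a geodesic for $\ud^\beta$ in $\uLeg(\Delta)$. The orderability hypothesis on $\uLeg(\Delta)$ is precisely what licenses the application of the Legendrian theory here.

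Next I would invoke Theorem~\ref{thm:geo} applied to the Legendrian isotopy $(gr_\alpha(\phi_t))$ in the orderable class $\uLeg(\Delta)$: it is a geodesic for $\ud^\beta$ (either $\dspec^\beta$ or $\dSCH^\beta$) if and only if it is $\beta$-quasi-autonomous. But $\beta$-quasi-autonomy of $(gr_\alpha(\phi_t))$ is \emph{by definition} (see the definition immediately preceding the corollary) the statement that $(\phi_t)$ is $\alpha$-quasi-autonomous. Chaining the two equivalences—geodesic for $|\cdot|^\alpha$ iff $(gr_\alpha(\phi_t))$ is a geodesic for $\ud^\beta$, iff $(gr_\alpha(\phi_t))$ is $\beta$-quasi-autonomous, iff $(\phi_t)$ is $\alpha$-quasi-autonomous—closes the argument. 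The parenthetical remark in the corollary that a geodesic for the right-invariant distance is automatically one for the left-invariant distance should follow from the observation that inversion $\phi\mapsto\phi^{-1}$ and the symmetry of the quasi-autonomy condition (the freedom in $\epsilon\in\{\pm1\}$) interchange the two pictures without disturbing the characterizing condition.

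The one genuine point requiring care—and the step I expect to be the main obstacle—is verifying cleanly that the graph map is a \emph{length-preserving isometry} of the relevant pseudo-distances, not merely a local homeomorphism. The equality $\dSCH^\beta(gr_\alpha(\phi),gr_\alpha(\psi))=\NSH{\phi\psi^{-1}}^\alpha$ has to be checked at the level of the Hamiltonian length functionals: one must confirm that the $\beta$-Hamiltonian generating the Legendrian isotopy $(gr_\alpha(\phi_t\psi^{-1}))$ has the same $\max|\cdot|$ profile, fibrewise and up to reparametrisation, as the $\alpha$-Hamiltonian generating $(\phi_t\psi^{-1})$—this is the contact-product analogue of the standard fact that the graph construction intertwines the Shelukhin–Hofer norm with the Shelukhin–Chekanov–Hofer distance to the diagonal. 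Once this identification of length functionals is in place, the spectral case follows from Corollary~\ref{cor:shel} (local agreement of $\dspec$ and $\dSCH$) exactly as in the proof of Theorem~\ref{thm:geo}, so it suffices to treat $\dSCH^\beta$. I would therefore spend the bulk of the write-up pinning down this intertwining, and then state the chain of equivalences above as an essentially formal consequence.
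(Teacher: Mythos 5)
Your reduction to Theorem~\ref{thm:geo} via the graph embedding into $(M\times M\times\R,\ker\beta)$ is exactly the paper's route; the paper's own proof is essentially the one line that the corollary follows from Theorem~\ref{thm:geo} together with the $C^1$-local isometry between $\uGcont$ and $\uLeg(\Delta)$ established in Corollaries~\ref{cor:ugcontlocflat} and~\ref{cor:shelukhin contact}. The one thing to correct in your write-up is the claim that $\dspec^\beta(gr_\alpha(\phi),gr_\alpha(\psi))=\Nspec{\phi\psi^{-1}}^\alpha$ holds ``by construction'': it does not, and it is not proved in the paper. The selectors $c_\pm^\alpha$ are defined intrinsically on the group via the order, not as pullbacks of $\ell_\pm^\beta$; the proof of Corollary~\ref{cor:ugcontlocflat} only gets $c_+^\alpha\geq\mathcal{C}_+^\alpha$ and $c_-^\alpha\leq\mathcal{C}_-^\alpha$ from maximality, with equality forced near the identity by the sandwich~(\ref{eq:GselectorsHam}). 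Likewise $\dSCH^\beta$ between two graphs is an infimum over \emph{all} Legendrian paths in $\Leg(\Delta)$, including non-graphical ones, so its coincidence with $\NSH{\cdot}^\alpha$ is again only known $C^1$-locally (Corollary~\ref{cor:shelukhin contact}); whether it holds globally is essentially Question~\ref{question}(2). None of this damages your argument, because geodesy is a local condition and the length functionals $\LSH^\alpha(\phi_t)=\LSCH^\beta(gr_\alpha(\phi_t))$ do agree globally on isotopies (the $\beta$-Hamiltonian of the graph isotopy at $(x,\phi_t(x),g_t(x))$ equals $H_t(\phi_t(x))$, so the sup-norms match); but you should phrase the intertwining of distances as the $C^1$-local statement and cite the two corollaries rather than promise a global identity you cannot prove.
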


The second author in \cite{Arlove2023} characterized some minimizing geodesics of the Shelukhin-Hofer norm on the identity component of the group of compactly supported contactomorphisms of $\R^{2n}\times \sphere{1}$ endowed with its standard contact form $\alpha_{st}$ 
. Since $\R^{2n}\times \sphere{1}$ is non compact, the previous Corollary \ref{cor:geo} does not cover this case. However the geodesics characterized in \cite{Arlove2023} are indeed special cases of $\alpha_{st}$-quasi-autonomous isotopies. It would be interesting to extend the selectors $\ell_\pm^\alpha$ and $c_\pm^\alpha$ to compactly supported isotopies and extend the results of this paper to the non compact settings (see also \cite[Remark 1.4]{allais2023spectral}).   

\begin{questions}\label{question}\
\begin{enumerate}
    \item Does it exist $\Lambda_1\in\Leg(\Lambda_0)$ or $\uLeg$ (resp.
        $\phi_1\in\Gcont$ or $\uGcont$) that cannot be attained by minimizing
        smooth geodesics, \emph{i.e.} for any isotopy $(\Lambda_t)\subset\Leg$ or
        $\uLeg$ (resp. $(\phi_t)\subset\Gcont$ or $\uGcont$)
    \[
    \LSCH^\alpha(\Lambda_t) =
    \mathrm{Length}_{\dspec^\alpha}(\Lambda_t) > \dSCH^\alpha(\Lambda_1,\Lambda_0) \geq
    \dspec^\alpha(\Lambda_1,\Lambda_0)
    \]
    \[\text{ (resp. } 
        \LSH^\alpha(\phi_t)=
        \mathrm{Length}_{\Nspec{\cdot}^\alpha}(\phi_t)>
        \NSH{\phi_1}^\alpha\geq \Nspec{\phi_1}^\alpha)\text{?}
    \]
    \item Does Shelukhin-Chekanov-Hofer type distance (resp. norm) agree with the spectral distance (resp. spectral norm)?
    \item Does orderable $\Leg$ (resp. $\Gcont$) endowed with the spectral
        distance (resp. spectral norm) is an intrinsic metric space (when
        taking the infimum of length over continuous paths for the topology
        induced by the distance, \emph{i.e}. the interval topology
        \cite{CheNem2020,Nakamura2023})?
\end{enumerate}
\end{questions}
Note that a negative answer to question (3) implies a negative answer to
question (2). Moreover a negative answer to question (2) implies a positive answer to
question (1) for the spectral type distances.

 To answer positively to question (1) one can try to adapt some construction of
 Lalonde-McDuff. Indeed in the Hamiltonian case
 Lalonde-McDuff \cite[Prop 5.1 Part I]{lalondemcduff12} constructed examples of
 $\tpsi\in\widetilde{\ham}(\CP^1)$ that cannot be joined to the identity by any
 minimizing geodesics $\{\psi_t\}\subset\ham(\CP^1)$ for the Hofer length. It
 would be interesting to investigate whether certain lifts
 $\tphi\in\uGcont(\RP^3)$ of $\tpsi$ satisfy a similar property : they cannot
 be joined to the identity by minimizing geodesics of the Hofer-Shelukhin
 length. Moreover since Corollary \ref{cor:shelukhin contact} and the discussion following it imply that the
 Hofer-Shelukhin length and the spectral length agree on smooth
 paths in this context, such $\tphi$ would be examples of elements lying inside
$\uGcont\setminus\mathcal{E}_\pm$. 

To answer negatively to question (2) it would be enough to show that the
Shelukhin-Hofer type distance (resp. norm) is not compatible with the partial
order.

\bibliographystyle{amsplain}
\bibliography{biblio} 
\end{document}